\theoremstyle{plain}
\newtheorem{theorem}{Theorem}[section]
\newtheorem{prop}[theorem]{Proposition}
\newtheorem{lemma}[theorem]{Lemma}
\newtheorem*{claimstar}{Claim}
\newenvironment{claimstarproof}{\noindent\textit{Proof of
    Claim.}}{\hfill\qedsymbol \tiny{ Claim}
\medskip}
\newcounter{claimCount}
\newenvironment{claim}{\medskip \vskip-1mm\noindent
  \refstepcounter{claimCount}\textbf{Claim~\arabic{claimCount}.}}{\medskip}
\newenvironment{claimproof}{\noindent\textit{Proof of
    Claim~\arabic{claimCount}.}}{\hfill\qedsymbol
  \tiny{ Claim~\arabic{claimCount}}
\medskip}
\theoremstyle{definition}
\newtheorem{remark}[theorem]{Remark}
\newtheorem{fact}[theorem]{Fact}
\newtheorem{definition}[theorem]{Definition}
\newtheorem{example}[theorem]{Example}
\newtheorem*{notation}{Notation}
\newcommand{\nc}{\newcommand}
\nc{\Z}{\mathbb{Z}}
\nc{\N}{\mathbb{N}}
\nc{\Fp}{\mathbb{F}_p}
\nc{\UU}{\mathbb{U}}
\nc\LL{\mathcal L}
\nc\LLD{\mathcal L_{D}}
\nc\Pred{\mathcal P}
\nc{\dcfp}{\mathsf{DCF}_p}
\nc{\scf}{\mathsf{SCF}}
\nc{\scfp}{\scf_p}
\nc{\scfpe}{\scf_{p,e}}
\nc{\scfpi}{\scf_{p,\infty}}
\def\cl#1{\langle#1\rangle}
\nc{\restr}[1]{\!\!\upharpoonright_{#1}} 
\nc{\Land}{\bigwedge}
\nc{\proves}{\vdash}
\renewcommand{\phi}{\varphi}
\nc\ord{\operatorname{ord}}
\nc{\dcl}{\operatorname{dcl}}
\nc{\acl}{\operatorname{acl}}
\nc{\ldim}{\operatorname{ldim}}
\nc\inv{ ^{-1}}
\nc{\ldef}[2][]{\overline{\lambda}#1(#2)\!\downarrow}
\nc{\tp}{\operatorname{tp}}
\nc{\cf}{\text{cf. }}
\nc\icl{indiscernibly closed\xspace}
\nc\Ann{\operatorname{Ann}}
\nc{\Gr}{\operatorname{Gr}}
\nc{\pk}{\operatorname{Pk}}
\nc{\pluck}{\operatorname{\lrcorner}}
\newcommand{\extp}{\@ifnextchar^\@extp{\@extp^{\,}}}
\def\@extp^#1{\mathop{\Land\nolimits^{\!#1}}}
\def\Ind#1#2{#1\setbox0=\hbox{$#1x$}\kern\wd0\hbox to 0pt{\hss$#1\mid$\hss}
\lower.9\ht0\hbox to 0pt{\hss$#1\smile$\hss}\kern\wd0}
\def\Notind#1#2{#1\setbox0=\hbox{$#1x$}\kern\wd0\hbox to
0pt{\mathchardef\nn="0236\hss$#1\nn$\kern1.4\wd0\hss}\hbox
to 0pt{\hss$#1\mid$\hss}\lower.9\ht0
\hbox to 0pt{\hss$#1\smile$\hss}\kern\wd0}
\def\ind{\mathop{\mathpalette\Ind{}}}
\def\nind{\mathop{\mathpalette\Notind{}}}
\def\ld{\mathop{\ \ \hbox to 0pt{\hss$\mid^{\hbox to
0pt{$\scriptstyle\mathrm{ld}$\hss}}$\hss}
\lower4pt\hbox to 0pt{\hss$\smile$\hss}\ \ }}
\def\indone{\mathop{\ \ \hbox to 0pt{\hss$\mid^{\hbox to
				0pt{$\scriptstyle\mathrm{1}$\hss}}$\hss}
		\lower4pt\hbox to 0pt{\hss$\smile$\hss}\ \ }}
\begin{document}

\title{Non-forking independence in stable theories}
\date{\today}

\author{ Amador Martin-Pizarro}
\address{Mathematisches Institut,
  Albert-Ludwigs-Universit\"at Freiburg, D-79104 Freiburg, Germany}
\email{pizarro@math.uni-freiburg.de}
\keywords{Model Theory, Separably closed fields, Differentially closed
  fields}
\subjclass{03C45, 12H05}

\begin{abstract}
 We observe that a simple condition suffices to describes non-forking independence over models in a stable theory. 
 
Under mild assumptions, this description can be extended to non-forking independence over algebraically closed subsets, without having to use the full strength of the work of the seminal work of Kim and Pillay. The results in this note (which are surely well-known among most model theorists) essentially  use that types over models in a stable theory are stationary. 
\end{abstract}

\maketitle

\section{Non-forking and stationarity}\label{S:nonforking}
Consider a first-order complete  theory $T$ in some fixed language $\LL$ and work in a sufficiently saturated model $\mathbb U$ of $T$. All tuples and subsets are taken inside the real sort of $\UU$, unless explicitly stated.

\begin{definition}\label{D:Indep_properties}\textup{(}\cf \cite[Definition 1.1]{hA05}\textup{)}~
	An \emph{(abstract) independence relation} is a ternary relation $\indone$ between triples of subsets of $\UU$. The set $A$ is \emph{$\indone$-independent from $B$ over} $C\subset A\cap B$, denoted by $A\indone_C B$, if the triple $(A, B, C)$ belongs to the relation determined by the independence relation $\indone$. In case $C$ is not a subset of $A\cap B$, the notation $A\indone_C B$ (quite  common in model theory) should be understood as $C\cup A\indone_C C\cup B$. 
	
We now list some properties for an independence relation $\indone$ (where we implicitly assume that $C\subset A\cap B$ in the following):
	\begin{description}
	\item[\bf Invariance] Given triples $ABC$ and $A'B'C'$ with the same type (with respect to some enumeration), that is $ABC\equiv A'B'C'$,  if  $A\indone_{C} B$ holds, then so does $A'\indone_{C'} B'$.	\vskip2mm
	\item[\bf Symmetry] If $A\indone_C B$, then  $B\indone_C A$. \vskip2mm 
	\item[\bf Monotonicity] Given $C\subset B\subset D$, if $A\indone_C D$, then $A\indone_C B$  \vskip2mm
	\item[\bf Base Monotonicity] Given $C\subset B\subset D$,   if   $A\indone_C D$, then  $A\cup B\indone_B D$.  \vskip2mm
	\item[\bf Transitivity] Given $C\subset B\subset D$,   if  $A\indone_C B$ and  $A\cup B\indone_{B} D$, then  $A\indone_{C} D$.\vskip2mm
		\item[\bf Extension] Given $A$, $C$ and $B$ with $C\subset B$, there is some $A' \equiv_C	A$ with $A'\indone_C B$. \hskip2mm
	\item[\bf Finite Character] The independence $A\indone_C B$ holds if and  only if for all finite tuples $a$ in $A$ and $b$ in $B$, we have that $C\cup a\indone_C C\cup b $. \vskip2mm
	\item[\bf Local Character] For every finite tuple $a$ and every subset $B$ there is some subset $C\subset B$ with $|C|\le |T|$ such that $C\cup a\ind_CB$. \vskip2mm 
	\item[\bf Stationarity over models]  Whenever $A$ and $A'$ have the same type over the elementary substructure $M\subset B$, if both independences $M\cup A\indone_M B$ and $M\cup A'\indone_M B$ hold, then $A\equiv_B A'$. 
\end{description}
	
\end{definition}
\begin{remark}\label{R:basedness}
If the independence relation $\indone$ satisfies {\bf Invariance} and {\bf Extension}, then it also satisfies 
\begin{description}
\item[Basedness]  For every subsets $A$ and $B$, we have that $A\cup B\indone_B \acl(B)$,
\end{description}
since every automorphism fixing $B$ permutes the (model-theoretic) algebraic closure $\acl(B)$.
\end{remark}

Given a tuple $a$ and subsets $C\subset B$, we say that $a$ is non-forking independent from $B$ over $C$, denoted by $a\ind_C B$, if $\tp(a/B)$ is a non-forking extension of $\tp(a/C)$. More generally, we say that the subset $A$ is  non-forking independent from $B$ over $C$, denotes by $A\ind_C B$, if $a\ind_C B$ for every finite tuple $a$ in $A$.

Recall that the theory $T$ is \emph{stable} if there exists some infinite cardinal $\kappa$ such that $|S_n(A)|\le \kappa$ for every subset $A$ of parameters of size at most $\kappa$, where $S_n(A)$ denotes the space of $n$-types  over $A$.  By the work of Kim and Pillay \cite{KP97}, the theory $T$ is stable if and only if there exists an independence relation satisfying {\bf Invariance, Symmetry, Monotonicity, Base Monotonicity, Transitivity}, {\bf Local and Finite Character}, {\bf Extension} as well as {\bf Stationarity over models}. Moreover, such an  independence relation is unique and agrees with  non-forking independence.

\begin{notation}
{\bf From now on, we denote non-forking indepencence by $\ind$.}
\end{notation}

\begin{definition}\label{D:weak_indep}
A \emph{weak notion of independence} is an invariant independence relation $\indone$ such that $A\ind_M B$ implies $M\cup A\indone_M B$ for all subsets $A$, $B$ and  $M\subset B$ with $M$ an elementary substructure. 
\end{definition}

\begin{prop}\label{P:uniq}
	Assume that the stable theory $T$ admits a weak notion of independence  $\indone$ which satisfies {\bf Stationarity over models}. Then the relation $\indone$ agrees with non-forking independence over elementary substructures, that is, if $M\subset B$ is an elementary substructure and $A$ is a set, then \[ A\ind_M B \ \text{ if and only if } M\cup A\indone_M B.\]
		
		Furthermore, assume the weak notion of independence $\indone$ satisfies {\bf Basedness} (see Remark \ref{R:basedness}), {\bf Monotonicity, Base Monotonicity, Transitivity},  {\bf Stationarity over models} as well as   
		\begin{description}	
\item[Weak Extension] For every subsets $A$, $B$ and $C\subset B$ with $C$ algebraically closed, there is some $A'\equiv_C A$  such that   $C\cup A'\indone_C B$.
\end{description} 
In this case, the weak notion of independence $\indone$ agrees with non-forking independence over all algebraically closed subsets, that is, if $C\subset B$ is algebraically closed, then \[ A\ind_C B \ \text{ if and only if } C\cup A\indone_C B.\]
\end{prop}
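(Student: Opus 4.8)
The plan is to treat the two assertions separately, in each case reducing the question to one over an elementary substructure, where the first assertion of the proposition (once established) together with \textbf{Stationarity over models} does the real work.

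For the first assertion, the implication from $A\ind_M B$ to $M\cup A\indone_M B$ is exactly the defining property of a weak notion of independence, so only the converse needs an argument. Assuming $M\cup A\indone_M B$, I use that non-forking in a stable theory has \textbf{Extension} to pick $A'\equiv_M A$ with $A'\ind_M B$; since $\indone$ is a weak notion of independence this gives $M\cup A'\indone_M B$. Now $A$ and $A'$ have the same type over the elementary substructure $M$ and both are $\indone$-independent from $B$ over $M$, so \textbf{Stationarity over models} yields $A\equiv_B A'$. An automorphism fixing $B$ pointwise (hence fixing $M\subseteq B$) carries $A'$ to $A$, and as non-forking is invariant, $A'\ind_M B$ transfers to $A\ind_M B$.

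For the second assertion I fix, via \textbf{Extension} for non-forking, an elementary substructure $M\supseteq C$ with $M\ind_C A\cup B$. On the non-forking side the standard manipulations with \textbf{Symmetry}, \textbf{Monotonicity}, \textbf{Base Monotonicity} and \textbf{Transitivity} show $A\ind_C B$ if and only if $A\cup M\ind_M B$. I then claim the parallel equivalence $C\cup A\indone_C B$ if and only if $A\cup M\indone_M B$ for $\indone$: to raise the base from $C$ to $M$ I adjoin $M$ on the right by \textbf{Transitivity} and then apply \textbf{Base Monotonicity}, and to lower it again I apply \textbf{Transitivity} in the other direction. Since $M$ is an elementary substructure, the first assertion identifies $A\cup M\ind_M B$ with $A\cup M\indone_M B$, and composing the three equivalences gives the result.

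The step carrying the whole argument, and the main obstacle, is the pair of auxiliary independences feeding the $\indone$ reduction, namely $C\cup A\indone_C M$ and $A\cup B\indone_B M$; by \textbf{Basedness} the latter reduces to the case of base $\acl(B)$, so both amount to the assertion that a model non-forking independent over an \emph{algebraically closed} base is already $\indone$-independent from it. Here \textbf{Stationarity over models} cannot be invoked directly, because the base is only algebraically closed and not a model, and indeed types over algebraically closed sets need not be stationary. To circumvent this I would use \textbf{Weak Extension} to produce a conjugate that is $\indone$-independent over the algebraically closed base, use \textbf{Basedness} to move freely between $B$ and $\acl(B)$, and pin the conjugate down by comparing it with the non-forking extension over $M$ through \textbf{Stationarity over models}, finally transporting the conclusion back to $A$ by invariance. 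The delicate point is that \textbf{Weak Extension} only controls the conjugate over the algebraically closed base, so the comparison over $M$ must be arranged so that the residual finite ambiguity of strong type is absorbed before \textbf{Stationarity over models} applies.
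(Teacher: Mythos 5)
Your treatment of the first assertion is correct and is essentially identical to the paper's argument. The second assertion, however, has a genuine gap, located exactly where you flag the ``delicate point''. The two auxiliary independences your base-change scheme needs, $C\cup A\indone_C M$ and $A\cup B\indone_B M\cup B$, are themselves instances of the hard implication you are trying to prove (non-forking over an algebraically closed base implies $\indone$-independence, with a model appearing on the right-hand side), so nothing has been reduced until that special case is actually established. Your sketch for it --- take $A'\equiv_C A$ with $C\cup A'\indone_C M$ by \textbf{Weak Extension} and then pin $A'$ down via \textbf{Stationarity over models} --- cannot work as stated: stationarity compares sets having the same type \emph{over the model} $M$, whereas Weak Extension only yields $A'\equiv_C A$, and since $C$ is merely a (real) algebraically closed set, $\tp(A/C)$ need not be stationary, so $\tp(A'/M)$ and $\tp(A/M)$ need not agree and stationarity over $M$ never gets a foothold. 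You acknowledge that the ``ambiguity of strong type'' must be absorbed, but you give no mechanism for doing so, and that mechanism is the actual content of the proof.

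Two missing ingredients are needed to close this. First, before any comparison of $A'$ with $A$ over $M$ can succeed, one must know that $\tp(A'/M)$ is a \emph{non-forking} extension of $\tp(A/C)$; that is, one needs the converse direction ($\indone$ over an algebraically closed base implies non-forking) beforehand. In your architecture that direction is itself derived from the auxiliary independences, which is circular; the paper breaks the circle by proving it separately (its Claim 3), using the Lascar--Poizat characterization of forking ($A\ind_C B$ iff for every model $M\supseteq C$ there is $A'\equiv_B A$ with $A'\ind_M M\cup B$) together with \textbf{Weak Extension}, \textbf{Transitivity}, \textbf{Base Monotonicity} and the first assertion. Second, the device that absorbs the strong-type ambiguity is not stationarity but saturation: choosing $M$ sufficiently saturated and containing the base, the (boundedly many) non-forking extensions of $\tp(A/C)$ to $M$ are conjugate by automorphisms fixing $C$ and respecting $M$, so once both $A\ind_C M$ and $A'\ind_C M$ are known one can map $\tp(A'/M)$ onto $\tp(A/M)$ and transfer $C\cup A'\indone_C M$ to $C\cup A\indone_C M$ by \textbf{Invariance}, finishing with \textbf{Monotonicity}. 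Note finally that your second auxiliary independence has base $\acl(B)\not\subseteq M$ and your $M$ was not chosen saturated, so even granting the ``model on the right'' special case you would still need to re-choose a suitable saturated model containing $\acl(B)$ before it applies; as written, the second assertion is not proved.
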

Note that we do not require that the weak notion of independence $\indone$ satisfies (full) extension, symmetry nor local or finite character. 
\begin{proof}~
	
\begin{claim}\label{Beh:eq_models}
Every weak notion of  independence $\indone$ satisfying {\bf Stationarity over models} agrees with non-forking independence over elementary substructures. 
\end{claim}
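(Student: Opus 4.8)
The plan is to prove the two assertions in turn, in each case reducing the question to a model base, where \textbf{Stationarity over models} and the defining property of a weak notion of independence can be applied. For the first assertion (Claim~\ref{Beh:eq_models}), note that the implication $A\ind_M B\Rightarrow M\cup A\indone_M B$ is immediate from the definition of a weak notion of independence, so only the converse needs an argument. Assuming $M\cup A\indone_M B$, I would use \textbf{Extension} for non-forking (available since $T$ is stable, by \cite{KP97}) to choose $A'\equiv_M A$ with $A'\ind_M B$; the defining property of $\indone$ then gives $M\cup A'\indone_M B$. Now $A$ and $A'$ have the same type over the model $M$ and both are $\indone$-independent from $B$ over $M$, so \textbf{Stationarity over models} forces $A\equiv_B A'$. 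Since $A'\ind_M B$ and non-forking is invariant, applying an automorphism fixing $B$ pointwise yields $A\ind_M B$. This uses nothing beyond stationarity of types over models.

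For the second assertion I would pass from the algebraically closed base $C$ to an auxiliary model. Using \textbf{Extension} for non-forking, fix a model $M\supseteq C$ with $M\ind_C A\cup B$. On the non-forking side the standard calculus (\textbf{Transitivity}, \textbf{Monotonicity}, \textbf{Base Monotonicity} and symmetry of $\ind$) gives $A\ind_C B\iff A\ind_M B$, because $M$ is non-forking independent from $A\cup B$ over $C$. By the first assertion the right-hand side is equivalent to $M\cup A\indone_M B$, so it suffices to establish the parallel equivalence for the abstract relation, namely $C\cup A\indone_C B\iff M\cup A\indone_M B$. To raise the base from $C$ to $M$ one would push the right-hand side up to contain $M$ and then absorb $M$ into the base via \textbf{Base Monotonicity}; to lower it back to $C$ one would invoke \textbf{Weak Extension} at the algebraically closed base $C$ to produce an $\indone$-independent conjugate of $A$ over $C$, use \textbf{Basedness} to control the relevant algebraic closures, and recombine over $C$ by \textbf{Transitivity}.

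The main obstacle is precisely this base change for $\indone$ between $C$ and the model $M$. Over $C=\acl(C)$ one cannot appeal to stationarity, since non-forking itself need not be stationary over a real algebraically closed set; the only mechanism producing base-$C$ independence statements is \textbf{Weak Extension}, and it yields a conjugate $A'$ with $A'\equiv_C A$ but in general \emph{not} $A'\equiv_B A$. The delicate point is therefore to reconcile such a conjugate with $A$ over $B$ without invoking any stationarity over $C$: this has to be routed through the model $M$, where \textbf{Stationarity over models} is available, exploiting the genericity of $M$ over $A\cup B$ together with \textbf{Basedness} to move between $C$ and the algebraic closures that arise. Making this matching go through — reconciling the conjugate supplied by \textbf{Weak Extension} with the non-forking data transported to the model $M$ — is the step I expect to require the most care.
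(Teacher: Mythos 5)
Your argument for the claim is correct and is essentially the paper's own proof: extend $\tp(A/M)$ to a non-forking extension realized by $A'$ over $B$, use the defining property of a weak notion of independence to get $M\cup A'\indone_M B$, apply \textbf{Stationarity over models} to conclude $A\equiv_B A'$, and transfer $A'\ind_M B$ to $A\ind_M B$ by invariance of non-forking. (The further discussion of the algebraically closed base concerns the second assertion of the proposition, not the claim at hand, so it needs no assessment here.)
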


\begin{claimproof}
Consider subsets $A$, $B$ and $M\subset B$ with $M$ an elementary substructure such that $M\cup A\indone_M B$. In order to show that $A\ind_M B$, choose some $A'\equiv_M A$ with $A'\ind_M B$ (since non-forking satisfies extension).  We have that $M\cup A'\indone_M B$, for $\indone$ is a weak notion of independence. Now, {\bf Stationarity over models} for $\indone$ yields that $A\equiv_B A'$, so $A\ind_M B$ by invariance of non-forking independence. 
\end{claimproof}

Assume now that the weak notion of independence $\indone$ satisfies  {\bf Basedness, Weak Extension, Monotonicity, Base Monotonicity, Transitivity} as well as {\bf Stationarity over models}. We will show that $\indone$ agrees with non-forking independence over algebraically closed subsets using the following claims. 

\begin{claim}\label{Beh:LascarPoizat}\textup{(}\cf \cite[Proposition 5.4]{LP79}\textup{)} Given subsets $A$, $B$ and $C\subset B$, we have that $A\ind_C B$ if and only if  for every elementary substructure $M\supset C$, there is some $A'\equiv_B A$ with $A' \ind_M M\cup B$. 
\end{claim}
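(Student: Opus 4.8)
The plan is to establish the two implications directly for non-forking independence $\ind$, invoking only its formal properties in the stable theory $T$ (guaranteed by the Kim--Pillay characterization recalled above): \textbf{Invariance}, \textbf{Symmetry}, \textbf{Monotonicity}, \textbf{Base Monotonicity}, \textbf{Transitivity} and \textbf{Extension}; no stationarity is needed here. The one auxiliary fact I would isolate at the outset is that for any $C$ and $B$ there is an elementary substructure $M\supseteq C$ with $M\ind_C B$: take any elementary substructure $M_0\preceq\UU$ containing $C$, and apply \textbf{Extension} to obtain $M\equiv_C M_0$ with $M\ind_C B$; being an automorphic image of $M_0$, the set $M$ is again an elementary substructure containing $C$. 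This is the model I will feed into the right-hand side in the backward direction.

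For the implication from left to right, suppose $A\ind_C B$ and let $M\supseteq C$ be an arbitrary elementary substructure. By \textbf{Extension} with base $B$, choose $A'\equiv_B A$ with $A'\ind_B M\cup B$. Since $A'\equiv_B A$ and $C\subseteq B$, \textbf{Invariance} transports $A\ind_C B$ to $A'\ind_C B$. Now \textbf{Transitivity} along $C\subseteq B\subseteq M\cup B$ combines $A'\ind_C B$ and $A'\ind_B M\cup B$ into $A'\ind_C M\cup B$, and \textbf{Base Monotonicity} along $C\subseteq M\subseteq M\cup B$ raises the base to yield $A'\ind_M M\cup B$, as required. This argument works verbatim for every such $M$.

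For the converse, I would not use an arbitrary $M$ but specialize the hypothesis to the model $M\supseteq C$ with $M\ind_C B$ produced at the start; this choice is the crux. The hypothesis yields some $A'\equiv_B A$ with $A'\ind_M M\cup B$, so by \textbf{Symmetry} we get $B\ind_M A'$, while $M\ind_C B$ gives $B\ind_C M$, again by \textbf{Symmetry}. Applying \textbf{Transitivity} along $C\subseteq M\subseteq M\cup A'$ to $B\ind_C M$ and $B\ind_M A'$ produces $B\ind_C M\cup A'$, and \textbf{Monotonicity} down to $C\cup A'$ gives $B\ind_C A'$, that is $A'\ind_C B$ after one more application of \textbf{Symmetry}. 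Finally, \textbf{Invariance} together with $A'\equiv_B A$ transfers this back to $A\ind_C B$.

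The only genuine subtlety, and the step I expect to require care, is the converse: the clean chaining of \textbf{Transitivity} and \textbf{Monotonicity} forces the auxiliary model to satisfy $M\ind_C B$, so that the availability of such a model (established at the outset via \textbf{Extension} and L\"owenheim--Skolem) is precisely what makes the ``for every $M$'' hypothesis usable. One must also keep careful track of the convention $A\ind_C B \Leftrightarrow C\cup A\ind_C C\cup B$ when matching the sets in each invocation of \textbf{Transitivity} and \textbf{Base Monotonicity}.
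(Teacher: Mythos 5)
Your proof is correct and follows essentially the same route as the paper: extension, invariance, transitivity and base monotonicity for the forward direction, and for the converse the specialization to an elementary substructure $M\supset C$ with $M\ind_C B$. The paper states this converse only as ``choose such an $M$ with $M\ind_C B$'' and leaves the remaining symmetry/transitivity/monotonicity chain implicit, which is exactly what you carry out in detail.
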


\begin{claimproof}
	Both directions are straightforward applications of non-forking yoga:  If $A\ind_C B$, given an arbitrary elementary substructure $M\supset C$, choose $A'\equiv_B A$ with $A'\ind_B M\cup B$. Since $A'\ind_C B$ (for $A\ind_C B$), we deduce now from transitivity of non-forking independence that $A'\ind_C M\cup B$, so $A'\ind_M M\cup B$ by base monotonicity. Assume now that the right-hand side holds for all elementary substructures, and choose now such an elementary substructure $M\supset C$ with $M\ind_C B$.  \end{claimproof}

\begin{claim}\label{Beh:partone}
If $C\subset B$ is algebraically closed and $C\cup A\indone_C B$, then  $A\ind_C B$.
\end{claim}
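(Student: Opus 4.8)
The plan is to verify the nonforking characterisation furnished by Claim~\ref{Beh:LascarPoizat}: I will show that for every elementary substructure $M\supseteq C$ there is some $A'\equiv_B A$ with $A'\ind_M M\cup B$, which by that claim yields $A\ind_C B$. So I fix an arbitrary elementary substructure $M\supseteq C$. The whole task is to produce a copy $A'$ having the correct type over $B$ \emph{and} the required independence over the model $M$; Claim~\ref{Beh:eq_models} will then let me read the latter off an $\indone$-statement whose base is the model $M$.

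The crucial idea is to invoke {\bf Weak Extension} not over $C$ but over the algebraically closed set $\acl(B)$. Since $\acl(B)$ is algebraically closed and contains $B$, this produces some $A'\equiv_{\acl(B)}A$ --- so in particular $A'\equiv_B A$ and $A'\equiv_C A$, whence the hypothesis transfers to $C\cup A'\indone_C B$ by {\bf Invariance} --- together with $\acl(B)\cup A'\indone_{\acl(B)}(\acl(B)\cup M)$. The main obstacle I anticipate is to pass from this last independence, whose base is $\acl(B)$, to an independence with base the non-model $B$, namely $A'\cup B\indone_B (M\cup B)$. I expect to clear it by combining that independence with {\bf Basedness} (which supplies $A'\cup B\indone_B\acl(B)$) through {\bf Transitivity} along $B\subseteq\acl(B)\subseteq\acl(B)\cup M$, and then trimming the right-hand side with {\bf Monotonicity} down to $M\cup B$.

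Once $A'\cup B\indone_B(M\cup B)$ is in hand, the remaining steps should be routine. Feeding it together with the transferred hypothesis $C\cup A'\indone_C B$ into {\bf Transitivity} along $C\subseteq B\subseteq M\cup B$ gives $C\cup A'\indone_C(M\cup B)$; {\bf Base Monotonicity} along $C\subseteq M\subseteq M\cup B$ then lifts the base to the model $M$, yielding $M\cup A'\indone_M(M\cup B)$. As $M$ is an elementary substructure, Claim~\ref{Beh:eq_models} converts this into $A'\ind_M M\cup B$, and since $A'\equiv_B A$ this is exactly the witness demanded by Claim~\ref{Beh:LascarPoizat}. I therefore expect the only genuinely delicate point to be the choice of base $\acl(B)$ for {\bf Weak Extension}: it is this choice that simultaneously secures $A'\equiv_B A$ and, via {\bf Basedness}, the descent of the base from $\acl(B)$ to $B$.
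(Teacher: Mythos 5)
Your argument is correct and follows the paper's proof in all essentials: both verify the criterion of Claim~\ref{Beh:LascarPoizat} by applying {\bf Weak Extension} over an algebraically closed set containing $B$, then use {\bf Transitivity}, {\bf Base Monotonicity} and Claim~\ref{Beh:eq_models} to obtain $A'\ind_M M\cup B$. The only (cosmetic) difference is that the paper first reduces to the case $B=\acl(B)$ via {\bf Basedness} and {\bf Transitivity} and then extends over $B$ itself, whereas you extend over $\acl(B)$ directly and afterwards descend the base from $\acl(B)$ to $B$ by the same two properties plus {\bf Monotonicity}.
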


\begin{claimproof}
Assume that $C\cup A\indone_C B$. By {\bf Basedness} and {\bf Transitivity}, we may also assume that $B$ is algebraically closed. In order to show that $A\ind_C B$, it suffices by Claim \ref{Beh:LascarPoizat} to show that 
for every elementary substructure $M\supset C$, there is some $A'\equiv_B A$ with $A' \ind_M M\cup B$.  By {\bf Weak Extension} there is some $A'\equiv_B A$ with $B\cup A'\indone_B M \cup B$. {\bf Transitivity} yields that $C\cup A'\indone_C M \cup B$ and thus $a'\indone_M M\cup B$ by {\bf Base Monotonicity}. We deduce from Claim \ref{Beh:eq_models} that $A'\ind_M M\cup B$, as desired. \end{claimproof}	
	
By Claim \ref{Beh:partone}, we need only show that 	$A\ind_C B$ implies $ C\cup A\indone_C B$ whenever $C\subset B$ is algebraically closed. Choose a sufficiently saturated elementary substructure $M$ containing $B$ with $A\ind_B M$, so $A\ind_C M$ by transitivity of non-forking. By {\bf Weak Extension} choose some $A'\equiv_C A$ with $C\cup A'\indone_C M$. Claim \ref{Beh:partone} yields that $A'\ind_C M$. Now, the type $\tp(A/C)$ need not be stationary (for do not require that the theory $T$ has weak elimination of imaginaries), yet all of its non-forking extensions to $M$ are conjugate over $C$, by saturation. Therefore, there is an automorphism of $M$ which maps $\tp(A'/M)$ to $\tp(A/M)$. By invariance of the relation $\indone$, we conclude that $C\cup A\indone_C M$ and thus $C\cup a\indone_C B$ by  {\bf  Monotonicity}, as desired. 
\end{proof}

\begin{remark}\label{R:weak_hyp}

Suppose that the weak notion of independence $\indone$ satisfies {\bf Symmetry,  Monotonicity, Base Monotonicity, Transitivity} and {\bf Stationarity over models}.  In order for  {\bf Weak extension} to hold for $\indone$, it suffices that  for all  $C\subset B$ with $C$ algebraically closed there is some elementary substructure $M$ containing $C$ with $M\indone_C B$.

Such an elementary substructure always exists over any subset $C$ (possibly non-algebraically closed) whenever  $\indone$ additionally satisfies  {\bf Basedness} as well as {\bf Finite Character} and is such that every unary type $\tp(a/B)$ with $C\cup a\nind^1_C B$ is algebraic.
\end{remark}
\begin{proof}
	In order to prove  {\bf Weak extension}, consider $A$, $B$ and $C\subset B$ with $C$ algebraically closed. By assumption, there is  some elementary structure $M$ containing $C$ with $M\indone_C B$. Now, by extension of non-forking independence, find some $A'\equiv_M A$ (and in particular $A\equiv_C A$) with $A'\ind_M M\cup B$. Since $\indone$ is a weak notion of independence, we have that $M\cup A'\indone_M M\cup B$. {\bf Symmetry} yields that $M\cup B\indone_M M\cup A'$, so  $B\indone_C M\cup A'$  by {\bf Transitivity} and hence $B\indone_C C\cup A'$  by {\bf Monotonicity}. Thus, we have that $C\cup A'\indone_C B$, as desired. 
	
	If $\indone$ additionally satisfies {\bf Basedness} and {\bf Finite Character}, it follows that $B\indone_C \acl(C)$. Zorn's lemma yields a maximal algebraically closed subset $D$ of $\UU$ containing $C$ with $B\indone_C D$, so $D\indone_C B$. In order to show that $D$ is an elementary substructure, we need only show by Tarski's test that every non-algebraic formula $\varphi(x, d)$ over $D$ has a realization $a'$ with $D\cup \{a'\}\indone_D D\cup B$  (so $D\cup B\indone_D D\cup a'$). Indeed, if $a'$ is such a realization, then $B\indone_{D\cup \{a'\}} \acl(D\cup \{a'\})$ by {\bf Basedness}, so $B\indone_C \acl(D\cup \{a'\})$ by {\bf Transitivity}, which implies that $a'$ belongs to $D$, by maximality of $D$. 
	
	If there is no  realization $a'$ in $\UU$ of $\varphi(x, d)$ which is $\indone$-independent  from $D\cup B$ over $D$, then every realization of the non-algebraic formula $\varphi(x,d)$ lies in the set $\acl(D\cup B)$, by our assumption, which gives the desired contradiction by compactness. 
	\end{proof}

Simplicity, as introduced by Shelah \cite{sS80}, is a generalization of stability, in which non-forking independence satisfies all properties mentioned in Definition \ref{D:Indep_properties} except possibly {\bf Stationary over models} which is then replaced by the so-called {\bf Independence Theorem over models}: given a type $p$ over an elementary substructure $M$ as well as two supersets $A$ and $B$ of $M$ with $A\ind_M B$, the partial type $p_1\cup p_2$ over $A\cup B$ given by two non-forking extensions $p_1$ of $p$ to $A$ and $p_2$ of $p$ to $B$ does not fork over $M$.  We have not attempted to investigate whether there is a suitable version of Proposition \ref{P:uniq} characterizing non-forking independence for simple theories. 

If the simple theory expands the theory of fields, Chatzidakis showed in \cite[Theorem 3.5]{zC99} that non-forking independence over models always implies linear as well as $p$-independence (if the characteristic is positive).  A simple inspection of her proof shows that very little of the ambient simple theory is needed, so her proof easily generalizes to Kim-independence. We would like to thank Omar Le\'on S\'anchez for pointing out  Chatzidakis's elegant proof.  
\begin{definition}\label{D:coheir_prop}
An invariant notion of notion of independence $\indone$ in a (possibly unstable)  complete theory $T$ is: 
\begin{itemize}
\item {\bf Coheir Robust} if $M\cup a\indone_M B$ whenever the type $\tp(a/B)$ over $B$  of the finite tuple $a$  is \emph{coheir} (that is, finitely satisfiable) over the elementary substructure $M\subset B$. \vskip2mm 
\item {\bf Weighted} (or $\indone$ has {\bf Weight}) if for some infinite cardinal $\lambda$ there are no elementary substructure $M$, a finite tuple $a$ and a sequence $(b_\alpha)_{\alpha<\lambda}$  with $ b_\alpha\equiv_{M\cup a} b_0$ such that $M\cup b_\alpha \indone_M M\cup (b_\beta)_{\beta<\alpha}$, yet  \[ M\cup a\nind^1_{M} M\cup b_\alpha \text{ (or equivalently } M\cup a\nind^1_{M} M\cup b_0 ),\] for all $\alpha<\lambda$.  
\end{itemize}
\end{definition}

\begin{remark}\label{R:rank_weight}
A notion of independence $\indone$  is \emph{ranked} if there is an ordinal-valued rank function $R$ on all finitary types (that is, types in finitely many variables) such that for a finite tuple $a$ and subsets $C\subset B$, we have that $R(\tp(a/B))\le R(\tp(a/C))$. Moreover, equality holds if and only if $C\cup a\indone_C B$. 

Every invariant ranked notion of independence satisfying  {\bf Symmetry} and {\bf Transitivity} is {\bf Weighted} with $\lambda=\aleph_0$:  Indeed, for every $n$ in $\N$ \[ a\nind^1_{M \cup (b_0, \ldots, b_{n-1})} M \cup (b_0, \ldots, b_n),\] and thus 
 \[ R(\tp(a/M\cup(b_0, \ldots, b_n)))<R(\tp(a/M\cup(b_0, \ldots, b_{n-1}))).\] 
\end{remark}

\begin{definition}\label{D:lindisj}
Work now inside an ambient algebraically closed field $\mathbb K$. 
\begin{enumerate}[(a)]
\item Two subfields $L$ and $M$ of $\mathbb K$ are \emph{linearly disjoint} over a common subfield $k$, denoted by $L\ind^\textrm{ld}_k M$, if  the elements $a_1,\ldots, a_n$ of $L$ are linearly independent over $M$ whenever they were so over $k$. 
\item The elements $a_1, \ldots, a_n$ of $\mathbb K$ are algebraically independent over the subfield $k$ of $K$ if $P(a_1,\ldots, a_n)\ne 0$ for every non-trivial polynomial $P(T_1,\ldots, T_n)$ with coefficients in $k$. 
\item Two subfields $L$ and $M$ of $\mathbb K$ are \emph{algebraically independent} over a common subfield $k$, denoted by $L\ind^\textrm{ACF}_k M$, if the elements $a_1,\ldots, a_n$ of $L$ are algebraically independent over $M$ whenever they were so over $k$. 
\end{enumerate}
\end{definition}
\begin{remark}\label{R:lindisj}
Linear disjointness and algebraic independence are  both  symmetric independence notions which satisfy  Monotonicity, Base Monotonicity and Transitivity (taking the fields generated by the subsets in question). Moreover, linear disjointness always implies algebraic independence, and it coincides with algebraic independence whenever one of the extensions $k\subset L$ or $k\subset M$ is \emph{regular}, that is, linearly disjoint from the algebraic closure $k^{alg}$ over $k$. 
\end{remark}

\begin{example}\label{E:alg_ind}
If the complete (possibly unstable) theory $T$ expands the theory of fields in the language of rings,  given sets $A, B$ and $C\subset A\cap B$ in some ambient model $\UU$ of $T$, set \[ A\indone_C B \text{ if and only if } \left \{ \parbox{7cm}{the subfields  $\mathbb{F}\cl{A}$ and $\mathbb F\cl{B}$ are linearly disjoint over  the subfield  $\mathbb F\cl{C}$} \right. ,\] where $\mathbb F\cl{X}$ is the field of fractions of the the substructure generated by the subset $X$.

The ternary relation $\indone$ clearly satisfies {\bf Invariance, Symmetry, Monotonicity, Transitivity} as well as {\bf Finite Character}. Let us show that it also satisfies 
{\bf Coheir Robustness} and is {\bf Weighted}. For {\bf Coheir Robustness}, using that linear disjointness is symmetric, suppose the elements $x_1,\ldots, x_n$ of $\mathbb F\cl{B}$  satisfy a non-trivial linear relation with coefficients in $\mathbb F\cl{M\cup a}$. By clearing denominators, we may assume that $x_1,\ldots, x_n$  are terms in $B$ and that the coefficients belong to the substructure generated by $M\cup a$. We can encode all of this by a formula $\varphi(x,b)$   for some tuple $b$ in $B$ such that $\varphi(a, b)$ holds in $\UU$. Now, the type $\tp(a/B)$ is coheir over $M$, so there is some tuple $m$ in $M$ such that $\varphi(m, b)$ holds. In particular, the elements $x_1,\ldots, x_n$ satisfy a non-trivial linear equation with coefficients in $M$, as desired. 

We will now show that the above relation $\indone$ is {\bf Weighted} with $\lambda=\aleph_0$ (though this notion of independence need not be ranked in general).  Assume for a contradiction that there are an elementary substructure $M$, a finite tuple $a$ and a sequence $(b_n)_{n\in \N}$ such that \[  b_n\equiv_{M\cup a} b_0 \ \text{ and } \  \mathbb F\cl{M\cup b_n}\ind^\textrm{ld}_M  \mathbb F\cl{M\cup (b_0,\ldots, b_{n-1})} \text{ for all $n$ in $\N$},\]  but \[  \mathbb F\cl{M\cup a}\nind^\textrm{ld}_{M} \mathbb F\cl{M\cup b_0} \]  Choose some finite $M$-linearly independent tuple $u$ in $\mathbb F\cl{M\cup a}$ witnessing the last dependence. By {\bf Invariance}, since $b_n\equiv_{M\cup a} b_0$, the same tuple witnesses that for every $n$ in $\N$  the fields $M(u)$ and $\mathbb F\cl{M\cup b_n}$ are not linearly disjoint over $M$. Now, using that \[  \mathbb F\cl{M\cup b_n}\ind^\textrm{ld}_M  \mathbb F\cl{M\cup (b_0,\ldots, b_{n-1})},\] we deduce by symmetry and transitivity of linear disjointness that \[ \mathbb F\cl{M\cup (b_0,\ldots, b_{n-1})}(u) \nind^\textrm{ld}_{ \mathbb F\cl{M\cup (b_0,\ldots, b_{n-1})} } \mathbb F\cl{M\cup (b_0,\ldots, b_n)}, \] which gives the desired contradiction after at most $\mathrm{ldim}_M(u)=|u|$ many steps. 
\end{example}
\begin{remark}\label{R:Example_weight}
If all models of the theory $T$ of fields have positive characteristic $p$, the  notion of independence given by linear disjointness together with $p$-independence  (see the definitions before Fact \ref{F:p-base}) will also satisfy 
 {\bf Invariance, Symmetry, Monotonicity, Transitivity,  Finite Character, Coheir Robustness} and {\bf Weight}, by a verbatim argument as before. 
 
 However, the notion of independence given in Example \ref{E:alg_ind} need not satisfy {\bf Base Montonicity}. It will be the case if $\mathbb F\cl{A\cup B}$ is contained in the compositum field $\mathbb F\cl{A}\cdot \mathbb F\cl{B}$. The latter holds (after possibly adding parameters) if every function symbol $f$ the language $\LL$ of $T$  is \emph{operator-like}, that is, working over an elementary substructure $M$ both $f(x+y)$ and $f(x\cdot y)$ belong to the compositum field $\mathbb F\cl{M\cup x}\cdot \mathbb F\cl{M\cup y}$. 
\end{remark}

\begin{lemma}\label{L:ld}\textup{(}\cf \cite[Theorem 3.5]{zC99}\textup{)} 
Given some ambient model of a complete (possibly unstable) theory $T$, consider finite tuples $a$ and $b$ as well as an an elementary substructure $M$. If $\tp(a/M\cup b)$ does not \emph{Kim-divide} over $M$, then $M\cup a\indone_M M\cup b$ for every notion of independence having {\bf  Weight} and satisfying {\bf Invariance} and {\bf Coheir Robustness}. 

In particular, in any theory of fields non-forking independence over models  implies linear disjointness and $p$-independence  of the fields generated by the corresponding substructures, as in Example \ref{E:alg_ind} and Remark \ref{R:Example_weight}. These two independence relations are thus weak notions of independence, as in Definition \ref{D:weak_indep}.
\end{lemma}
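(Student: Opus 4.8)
The plan is to exploit a Kim-dividing version of the Lascar--Poizat characterisation together with a coheir sequence and the Weight hypothesis. Recall that $\tp(a/M\cup b)$ does not Kim-divide over the elementary substructure $M$ precisely when, for \emph{some} (equivalently, every) Morley sequence $(b_\alpha)_{\alpha<\lambda}$ in $\tp(b/M)$ that is \emph{coheir} over $M$ (that is, an $M$-indiscernible sequence which is finitely satisfiable over $M$), the partial type $\bigcup_{\alpha<\lambda}\tp(a/M\cup b_\alpha)$ is consistent. So first I would fix a coheir Morley sequence $(b_\alpha)_{\alpha<\lambda}$ starting with $b_0=b$, of length $\lambda$ equal to the cardinal witnessing {\bf Weight} for $\indone$; such a sequence exists by a standard compactness-and-saturation argument in the saturated model $\UU$.

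Next I would use the non-Kim-dividing hypothesis to produce, for this fixed coheir sequence, a single realisation. Since $\tp(a/M\cup b)$ does not Kim-divide over $M$, the type $\bigcup_{\alpha<\lambda}\tp(a/M\cup b_\alpha)$ is consistent; let $a'$ realise it, so that $a'\equiv_{M\cup b_\alpha} a$ for every $\alpha<\lambda$. In particular $b_\alpha\equiv_{M\cup a'} b_0$ for all $\alpha$, because $a'b_\alpha\equiv_M a'b_0$ follows from $a'b_\alpha\equiv_M ab_\alpha\equiv_M ab_0\equiv_M a'b_0$, using $M$-indiscernibility of the $b_\alpha$. The point is that the sequence $(b_\alpha)$ has become, after this move, a sequence of the kind forbidden in the definition of {\bf Weight}, provided the hypothesis $M\cup a'\nind^1_M M\cup b_\alpha$ were to hold for all $\alpha$.

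The heart of the argument is then a contrapositive application of {\bf Weight}. The sequence $(b_\alpha)$ is coheir over $M$, hence each initial segment is coheir, so by {\bf Coheir Robustness} we get $M\cup b_\alpha\indone_M M\cup(b_\beta)_{\beta<\alpha}$ for every $\alpha<\lambda$. If we also had $M\cup a'\nind^1_M M\cup b_\alpha$ for all $\alpha$, the pair $(a',(b_\alpha)_\alpha)$ would violate {\bf Weight} with this very $\lambda$. Therefore {\bf Weight} forces $M\cup a'\indone_M M\cup b_\alpha$ for at least one $\alpha$, and since $a'b_\alpha\equiv_M a'b_0$ by the preceding paragraph, {\bf Invariance} transports this to $M\cup a'\indone_M M\cup b_0$. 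Finally, as $a'\equiv_{M\cup b_0}a$ and $b_0=b$, a last appeal to {\bf Invariance} gives $M\cup a\indone_M M\cup b$, which is the claim. I expect the main obstacle to be the precise bookkeeping of the coheir/indiscernibility data: one must be careful that the realisation $a'$ really does convert $(b_\alpha)$ into a sequence satisfying the exact hypotheses of {\bf Weight} (the condition $b_\alpha\equiv_{M\cup a'}b_0$ and the coheir-chain condition simultaneously), and that the ``equivalently'' clause in the {\bf Weight} definition lets one pass freely between $b_\alpha$ and $b_0$.

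For the second assertion, I would simply specialise to a theory of fields and invoke the first part: by the classical fact that over a model non-forking does not Kim-divide (indeed non-forking and Kim-dividing coincide over models in the relevant generality), $a\ind_M b$ implies $\tp(a/M\cup b)$ does not Kim-divide over $M$. Applying the first part to the linear-disjointness relation of Example~\ref{E:alg_ind} and, in positive characteristic, to the linear-disjointness-plus-$p$-independence relation of Remark~\ref{R:Example_weight}---both of which were verified there to satisfy {\bf Invariance}, {\bf Coheir Robustness} and {\bf Weight}---yields that non-forking independence over models implies each of these relations. That they are consequently \emph{weak notions of independence} in the sense of Definition~\ref{D:weak_indep} is then immediate, since both are invariant and the implication $A\ind_M B\Rightarrow M\cup A\indone_M B$ has just been established for finite tuples and hence, by {\bf Finite Character} of the field-theoretic relations, for arbitrary sets.
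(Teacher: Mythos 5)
Your argument is correct and is essentially the paper's own proof: fix a coheir Morley sequence $(b_\alpha)_{\alpha<\lambda}$ of a global coheir extension of $\tp(b/M)$ with $b_0=b$, apply \textbf{Coheir Robustness} to get the chain condition, use non-Kim-dividing to realise $\bigcup_{\alpha<\lambda}\tp(a/M\cup b)(x,b_\alpha)$ by a single tuple $a'$, and then \textbf{Weight} together with \textbf{Invariance} yield $M\cup a\indone_M M\cup b$ (the paper organises this as a reductio, you contrapositively --- the same argument). The only blemishes are side remarks you do not need: the parenthetical ``some, equivalently every, coheir Morley sequence'' is Kim's lemma and fails in general theories (the definitional direction you actually use --- non-Kim-dividing gives consistency along every invariant, hence every coheir, Morley sequence --- is fine), your intermediate step $a'b_\alpha\equiv_M ab_\alpha$ is unjustified though the needed $a'b_\alpha\equiv_M a'b_0$ follows directly from $a'b_\alpha\equiv_M ab\equiv_M a'b_0$, and non-forking and Kim-dividing need not coincide over models, but only the implication from non-forking to non-Kim-dividing (cited in the paper from Kaplan--Ramsey) is required.
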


\begin{proof}
Assuming otherwise, find tuples $a$ and $b$ as well as an elementary substructure $M$ such that $\tp(a/M\cup b)$ does not Kim divide over $M$, yet  $M\cup a\nind^1_M M\cup b$. The type $q=\tp(b/M)$ is finitely satisfiable over $M$ and hence it admits a global extension $\tilde q$   which remains coheir over $M$ (so $ tilde q$ is $M$-invariant). If $\lambda$ is as in the the definition of {\bf Weight}, we now construct a Morley sequence  $(b_\alpha)_{\alpha<\lambda }$ of $\tilde q$ with $b_0=b$ and $b_{\alpha}$ realizing $\tilde{q}_{\hskip1mm \restr{M\cup  (b_\beta)_{\beta<\alpha}}}$ for $0\ne \alpha<\lambda$. {\bf Coheir robustness} yields that \[ M\cup b_\alpha\indone_M M\cup  (b_\beta)_{\beta<\alpha}.\] Now, the type $\tp(a/Mb)$ does not Kim-divide over $M$, so there exists in particular a tuple $a'$ with $a'b_\alpha\equiv_M ab$ for all $\alpha<\lambda$. We have that $b_\alpha\equiv_{M\cup a'} b_0$, but $M\cup a'\nind^1_M b_0$ by {\bf Invariance}, which contradicts {\bf Weight}. 

To prove the last assertion in the lemma, it suffices to notice that  non-forking independence over a model always implies Kim-independence \cite[Fact 7.2]{KR20}. 
\end{proof}

\section{Characterizing non-forking in some classical theories}\label{S:charact_examples}
\subsection*{The free pseudoplane}
The theory PS of the free pseudoplane   is a relational theory in the language $\LL=\{R\}$ consisting of a binary relation $R$ for the edge relation on the model of $T$, which  are exactly those graphs such that every vertex has infinitely many direct neighbors (with respect to the edge relation given by $R$) and with no closed reduced paths of length at least $2$. A  \emph{reduced path} of length $n$ is a sequence $a_0, a_1,\ldots, a_{n}$ with $a_i$ distinct from $a_{i+2}$ for all $i\le n-2$ and \[a_0 \stackrel{R}{\mbox{\textemdash}}  a_1 \stackrel{R}{\mbox{\textemdash}}  \cdots  \stackrel{R}{\mbox{\textemdash}}  a_{n-1}  \stackrel{R}{\mbox{\textemdash}}  a_n.\]

A reduced path  as above is \emph{closed} if $a_{n-1}$ is distinct from $a_0$ yet $ a_n \stackrel{R}{\mbox{\textemdash}}  a_0$. According to our  definitions, an edge $a \stackrel{R}{\mbox{\textemdash}}  b$ connecting two distinct points is a reduced path.

Note that if two points are connected, the reduced path connecting them is unique. The theory PS is $\omega$-stable, complete and model-complete \cite[Proposition 2.1]{BP00}. This theory does not have quantifier elimination, yet for certain subsets it is easy to determine whether they have the same type.  A non-empty subset $A$ of a model $M$ of PS is \emph{nice} if for any two given points in $A$, the (unique) reduced path connecting them lies in $A$ as well. The collection of $\LL$-isomorphism between nice subsets is a non-empty back-\&-forth system, so two nice subsets $A$ and $A'$ are $\LL$-isomorphic if and only if they have the same type.  
Moreover, nice sets are algebraically closed. The definable closure of a set $X$ is nice, coincides with the the algebraic closure and is obtained from $X$ by adding to it all possible reduced paths in $M$ between points in $X$.

\begin{lemma}\label{L:PS2}
Given sets $A, B$ and $C$ with $C\subset A\cap B$, non-forking independence in the stable theory PS can be characterised as follows: \\
\begin{center}
$A\ind_C B \  \Longleftrightarrow \ $ \begin{minipage}{10cm} whenever a point in $\acl(A)$ and  a point in $\acl(B)$ are connected, then the corresponding reduced path contains some point in $\acl(C)$.  \end{minipage}
\end{center}
\end{lemma}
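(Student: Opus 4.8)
The plan is to define the candidate relation $\indone$ directly from the right-hand side and to verify that it satisfies the hypotheses of the ``furthermore'' part of Proposition \ref{P:uniq}. Concretely, declare $A \indone_C B$ to hold if and only if every reduced path joining a point of $\acl(A)$ to a point of $\acl(B)$ meets $\acl(C)$. This relation is visibly symmetric, and it is invariant because $\LL$-automorphisms preserve the edge relation, hence connectedness, the unique reduced path between connected points, and the algebraic closure. Since the defining condition mentions only the algebraic closures $\acl(A)$, $\acl(B)$ and $\acl(C)$, the relation depends only on these; as non-forking independence in a stable theory likewise satisfies $A \ind_C B \iff A \ind_{\acl(C)} B$, it suffices to prove the equivalence for $C$ algebraically closed, i.e. nice, and then transfer it to an arbitrary base. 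Thus the whole argument reduces to checking that $\indone$ is a weak notion of independence in the sense of Definition \ref{D:weak_indep} and that it satisfies {\bf Basedness}, {\bf Monotonicity}, {\bf Base Monotonicity}, {\bf Transitivity}, {\bf Stationarity over models} and {\bf Weak Extension}.

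The structural properties are routine consequences of the forest structure of models of PS, namely that connected components are trees of infinite valency and that reduced paths, when they exist, are unique; recall also that nice sets are exactly the geodesically convex algebraically closed sets and that $\acl = \dcl$ is the closure under reduced paths. {\bf Basedness} and {\bf Monotonicity} are immediate from the inclusions $\acl(\acl(B)) = \acl(B)$ and $\acl(B) \subseteq \acl(D)$. For {\bf Transitivity}, if $A \indone_C B$ and $A\cup B \indone_B D$, any reduced path from a point of $\acl(A)$ to a point of $\acl(D)$ first meets $\acl(B)$ by the second hypothesis, and the corresponding initial segment, running between $\acl(A)$ and $\acl(B)$, then meets $\acl(C)$ by the first; {\bf Base Monotonicity} follows by a similar geodesic bookkeeping using the convexity of nice sets and acyclicity. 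That $\indone$ is weak, i.e. that $A \ind_M B$ forces $M\cup A \indone_M B$ over an elementary substructure $M$, is proved by contraposition exactly in the spirit of Lemma \ref{L:ld}: were there a reduced path from $\acl(M \cup A)$ to $\acl(B)$ avoiding $M = \acl(M)$, a coheir (Morley) sequence in $\tp(B/M)$ would produce distinct conjugates of that path attached to the same point on the $A$-side, forcing a closed reduced path and hence witnessing that the relevant formula divides over $M$, contradicting $A \ind_M B$.

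For {\bf Stationarity over models}, suppose $M$ is an elementary substructure with $M \subseteq B$, that $A \equiv_M A'$, and that both $M\cup A \indone_M B$ and $M\cup A' \indone_M B$ hold. The independence condition says precisely that the nice hull $\acl(B \cup A)$ is the free amalgam of $\acl(B)$ and $\acl(M\cup A)$ over $M$: no reduced path links the two pieces except through $M$, so as trees they are glued only along $M$. The same holds for $A'$, and since $A \equiv_M A'$ gives an $M$-isomorphism $\acl(M\cup A) \cong \acl(M\cup A')$, the two free amalgams are isomorphic over $B$ by a map sending $A$ to $A'$. As types over nice sets are determined by the isomorphism type of the nice hull via the back-and-forth system, this yields $A \equiv_B A'$.

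The remaining and decisive step is {\bf Weak Extension}: given nice $C \subseteq B$ and a set $A$, we must produce $A' \equiv_C A$ with $\acl(C\cup A') \indone_C B$, i.e. a copy of the configuration $\acl(C\cup A)$ over $C$ that connects to $\acl(B)$ only through $C$. Here I would invoke the free amalgamation property of PS: one glues the tree $\acl(C\cup A)$ onto $\acl(B)$ along the common nice subset $C$, attaching the new branches to fresh neighbours of $C$-points not already used by $B$. This is possible because every vertex has infinitely many neighbours, so there is always room, and the resulting graph is again a forest of infinite valency with no new closed reduced paths, hence embeds over $B$ into the monster $\UU$; the image of $A$ is the desired $A'$. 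Verifying that this amalgam is genuinely a model of PS --- in particular that attaching the copy creates no short closed reduced paths and preserves infinite branching --- is the main obstacle, and it is exactly the point at which the freeness of the pseudoplane is used. With {\bf Weak Extension} in hand, Proposition \ref{P:uniq} gives the equivalence over nice bases, and the reduction above extends it to an arbitrary base $C \subseteq A \cap B$.
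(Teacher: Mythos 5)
Your overall strategy is the same as the paper's: define $\indone$ by the right-hand side and feed it into Proposition \ref{P:uniq}. However, there is a genuine gap at the step where you argue that $\indone$ is a weak notion of independence. Your mechanism --- take a coheir (Morley) sequence $(b_i)$ in $\tp(B/M)$ and claim that conjugates of the offending path ``attached to the same point on the $A$-side'' force a closed reduced path --- does not work as stated: two reduced paths issuing from the same vertex $u$ towards distinct conjugates $b_i$, $b_j$ simply share an initial segment and then branch, and in an acyclic graph they never rejoin, so their union is a tree and no closed reduced path (hence no inconsistency) is produced. Concretely, the formula ``$x$ is joined to $b$ by a reduced path of length $n$'' need not divide over $M$ even when the path from $u$ to $b$ avoids $M$: if $b$ is a neighbour of some $m\in M$ and $u$ a neighbour of $b$ with $u\ne m$, the point $m$ realizes $d(x,b_i)=1$ for every conjugate $b_i$ of $b$ over $M$. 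The inconsistency only appears once the formula also records the position of $u$ relative to $M$ (e.g.\ $d(x,m)=2$ in this example), and producing such a formula in general requires a case analysis on how $u$, $b$ and $M$ are situated. The paper avoids this entirely by using the heir property: since $u\ind_M B$ and $M$ is a model, $\tp(u/B)$ is an heir of $\tp(u/M)$; one first gets (by the heir property) a point $m\in M$ connected to $u$, chooses it at minimal distance, and then contradicts the heir property with a formula $\varphi(x,b,m)$ asserting that $m$ does not occur on the reduced path from $x$ to $b$. Some argument of this precision is needed; your sketch, as written, would fail.

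Two smaller points. For {\bf Weak Extension} you attempt a direct free amalgamation of $\acl(C\cup A)$ with $\acl(B)$ over $C$ and you yourself flag the verification (no new closed reduced paths, embeddability over $B$ into the monster) as ``the main obstacle'' without carrying it out; the paper takes a cheaper route, building a nice set $M\supseteq C$ with $M\indone_C B$ in which every vertex keeps infinitely many neighbours, so that $M$ is a model by model-completeness, and then citing Remark \ref{R:weak_hyp} to get {\bf Weak Extension} from {\bf Symmetry, Monotonicity, Base Monotonicity, Transitivity} and {\bf Stationarity over models}. Your direct construction is salvageable but is real work, not a remark. Finally, {\bf Base Monotonicity} is not mere ``geodesic bookkeeping'': it is the one verification in the paper requiring an actual case distinction on where a point of $\acl(A\cup B)\setminus(\acl(A)\cup\acl(B))$ sits on the path between $A$ and $B$; you should spell it out (your {\bf Transitivity} and {\bf Stationarity} arguments, by contrast, match the paper's and are fine).
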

\begin{proof}
Given $C\subset A\cap B$, set 
\begin{center}
$A\indone_C B \  \Longleftrightarrow $ \begin{minipage}{10cm} whenever a point in $\acl(A)$ and  a point in $\acl(B)$ are connected, then the corresponding reduced path contains some point in $\acl(C)$.   \end{minipage}
\end{center}

It is immediate to verify that the above notion of independence satisfies {\bf Invariance, Basedness, Symmetry} as well as {\bf Finite Character}.  {\bf Monotonicity} holds trivially. For {\bf Base Monotonicity}, 
assume that $C\subset B\subset D$ and  that the set $A$ containing $C$ satisfies that $A\indone_C D$. In order to show that $A\cup B \indone _B D$, consider a reduced path connecting a point $y$ in $\acl(D)$ with a point $x$ in $\acl(A\cup B)$. If $x$ belongs to $\acl(B)$, we are done for trivial reasons. If $x$ belongs to $\acl(A)$, then the reduced path connecting $x$ to $y$ contains a point in $\acl(C)\subset \acl(B)$, since $A\indone_C D$. Thus, we need only consider the case that $x$ in \[ \acl(A\cup B)\setminus  \left( \acl(A)\cup \acl(B) \right) \] appears as a vertex in a reduced path connecting an element $a$ in $A$ to some $b$ in $B$. Since $A\indone_C B$, an element $z$ in $\acl(C)$ occurs in the reduced path from $a$ to $b$. Note that $x\ne z$.  If $z$ occurs in the reduced subpath connecting $a$ to $x$, then  $x$ would be in the nice subset $\acl(B)$, for  both $z$ and $b$  belong to $\acl(B)$. If however the point $z$ of $\acl(C)$ occurs in the subpath from $x$ to $b$, then  $x$ belongs to $\acl(A)$,  since both $z$ and $a$ lie in $\acl(A)$, which gives the desired contradiction.  (This proof also yields that forking in the pseudoplane is trivial!).

For {\bf Transitivity}, assume that $C\subset B\subset D$ and  the set $A$ containing $C$ is such that $A\indone_C B$ and $A\cup B\indone_B D$. In order to check that $A\indone_C D$, assume that some point $x$ in $\acl(A)$ is connected to some point $y$ in $\acl(D)$. The reduced path connecting $x$ to $y$ must contain a point $z$ in $\acl(B)$, for $A\cup B \ind_B D$. Now, the subpath from $x$ to $z$ is also reduced,  so it must contain a point $u$ in $\acl(C)$, since $A\ind_C B$. The point $u$ lies in the reduced path from $x$ to $y$, so $A\indone_C D$, as desired.

\begin{claimstar}
The independence relation determined by $\indone$ is a weak  notion of independence.
\end{claimstar} 

\begin{claimstarproof}
Consider a tuple $a$ and an elementary substructure $M$ contained in the subset $B$ with $a\ind_M B$. We may clearly assume that $B=\acl(B)$. In order to show that $M\cup a\indone_M B$, consider some element $u$ in $\acl(M\cup a)$ which is connected to a point $b$ in $B$ by a reduced path of length $n$. If $u$ belongs to $M$, we are done. Otherwise, non-forking independence yields that $u\ind_M B$, so the type $\tp(u/B)$ is a heir of $\tp(u/M)$ in the stable theory PS. Hence, there is some point $m$ in $M$ connected to $u$ by a reduced path. We may assume that $m$ in $M$ is chosen in such a way that its distance $k\ge 1$ to $u$ is least possible.  Since  the elementary substructure $M$ is nice, it follows that  every reduced path from $u$ to the element $m'$ in $M$ decomposes as the path from $u$ to $m$ and then the reduced path in $M$ from $m$ to $m'$. 

It suffices to show that $m$ occurs in the path from $u$ to $b$. Assuming otherwise, there is a formula $\varphi(x, b, m)$ in $\tp(u/B)$ witnessing  that the element $m$ does not occur in the unique reduced path from $u$ to $b$. By the heir property, there is some  element $m'$ in $M$ such that $\varphi(u, m', m)$ holds, which gives the desired contradiction, since the vertex $m$ must occur in the reduced path from $u$ to $m'$.  
\end{claimstarproof}

Given nice subsets $C\subset B$ , we may use the fact that every point has infinitely many direct neighbors in order to produce a nice set $M$ containing $C$ with $M\indone_C B$ such that every point in $M$ has infinitely many neighbors in $M$. It then follows that $M$ is is a model of the model-complete theory PS \cite[Proposition 2.1]{BP00} and hence $M$ is an elementary substructure.   If we show that the weak notion of independence $\indone$ satisfies {\bf Stationarity over models}, then Remark \ref{R:weak_hyp} and the above discussion yields {\bf Weak Extension}, so 
 $\indone$ coincides with non-forking independence (over algebraically closed sets) by Proposition \ref{P:uniq}.  

 Assume therefore that the subsets $A$ and $A'$ have the same type over the elementary substructure $M$ contained in $B$ with $M\cup A\indone_M B$ and $M\cup A'\indone_M B$. 
Now, the isomorphism $F:M\cup A \to M\cup A'$ fixing $M$ pointwise extends to an $M$-isomorphism between the definable closures $\dcl(M\cup A)$ and $\dcl(M\cup A')$, which are both nice sets.  Since nice sets are algebraically closed, we deduce that \[ \dcl(M\cup A )\indone_M \acl(B) \text{ and } \dcl(M\cup A' )\indone_M \acl(B).\] It follows immediately that $\acl(B) \cup\dcl(M\cup A)$ and $\acl(B) \cup\dcl(M\cup A')$ are nice sets. The isomorphism $F$ extends to an isomorphism \[ \acl(B) \cup\dcl(M\cup A) \to \acl(B) \cup\dcl(M\cup A')\] fixing $\acl(B)$ pointwise, so $A\equiv_B A'$, as desired. 

\end{proof}

\subsection*{Differentially closed fields of characteristic $0$}
We fix some natural number $m\ge 1$.  The  theory DCF$_{0,m}$ of differentially closed fields of characteristic $0$ with
$m$ commuting derivations  \cite{tMG00} (see  \cite[Chapter II]{MMP96} for the particular case of $m=1$) is complete with quantifier elimination and  $\omega$-stable. A straightforward back-\&-forth argument yields that any two isomorphic differential subfields have the same type. In particular, the algebraic closure of a 
subset $A$ is obtained by taking the field algebraic closure of the differential field $\mathbb{Q}\!<\hskip-1mm A\hskip-1mm>$ generated by $A$. 

\begin{lemma}\label{L:DCF_indep}
Given  differential subfields $k\subset L\cap M$ of a model $K$ of DCF$_{0,m}$, we have the following characterization of non-forking independence:
\[ L\ind_k M \hskip5mm  \Longleftrightarrow \hskip5mm  L\cdot k^\text{alg} \ind^\textrm{ld}_{k^\text{alg}} M\cdot k^\text{alg},\] where $\ind^\textrm{ld}$ denotes linear disjointness (see Definition \ref{D:lindisj}) and $L\cdot k^\text{alg}$ is the differential field obtained as a compositum of $L$ and $k^\text{alg}$. 
\end{lemma}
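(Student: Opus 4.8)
The plan is to produce an abstract independence relation $\indone$ on subsets of the ambient model, verify that it is a weak notion of independence satisfying the hypotheses of the second part of Proposition \ref{P:uniq}, and then read the characterisation off from that proposition. First I would reduce to the case that the base is algebraically closed. The right-hand side is manifestly unchanged when $k$ is replaced by $k^{\mathrm{alg}}=\acl(k)$, since $(k^{\mathrm{alg}})^{\mathrm{alg}}=k^{\mathrm{alg}}$; and the left-hand side is unchanged because non-forking independence is determined by algebraic closures and satisfies base monotonicity and transitivity, which by standard non-forking calculus (using that $\acl(k)$ is algebraic over $k$ and contained in $\acl(L)\cap\acl(M)$) gives
\[ L\ind_k M \quad\Longleftrightarrow\quad L\cdot k^{\mathrm{alg}}\ \ind_{k^{\mathrm{alg}}}\ M\cdot k^{\mathrm{alg}}.\]
So it suffices to treat an algebraically closed base.

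For arbitrary subsets $A,B$ and $C\subset A\cap B$, let $\langle X\rangle$ denote the differential subfield generated by $X$, and set
\[ A\indone_C B \quad\Longleftrightarrow\quad \langle A\rangle\cdot\acl(C)\ \ind^{\textrm{ld}}_{\acl(C)}\ \langle B\rangle\cdot\acl(C).\]
Since $\acl(C)$ is an algebraically closed field, linear disjointness over it coincides with algebraic independence over it (Remark \ref{R:lindisj}), and --- crucially --- algebraic independence over an algebraically closed field is insensitive to replacing either side by its field-theoretic algebraic closure. Invariance is clear. Over an elementary substructure $M$ we have $\acl(M)=M$ (models of DCF are algebraically closed as fields), so $M\cup A\indone_M B$ reduces to linear disjointness of $\langle M\cup A\rangle$ and $\langle M\cup B\rangle$ over $M$; hence Lemma \ref{L:ld} shows that $A\ind_M B$ implies $M\cup A\indone_M B$, so $\indone$ is a weak notion of independence. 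Basedness holds trivially (linear disjointness against the base field $\acl(B)$ is vacuous), and Monotonicity, Base Monotonicity and Transitivity follow from the corresponding properties of algebraic independence (Remark \ref{R:lindisj}), the passage through field-theoretic algebraic closures handling the enlargement of the base from $\acl(C)$ to $\acl(B')$.

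The two substantive points are Stationarity over models and Weak Extension, and both rest on the same mechanism: over the algebraically closed field $\acl(C)$ two regular differential field extensions amalgamate freely, their tensor product being a domain whose fraction field carries a unique compatible system of $m$ commuting derivations (standard differential algebra in characteristic $0$). For Stationarity, given $A\equiv_M A'$ with $M\cup A\indone_M B$ and $M\cup A'\indone_M B$, the differential $M$-isomorphism $\langle M\cup A\rangle\to\langle M\cup A'\rangle$ supplied by $A\equiv_M A'$ extends, by linear disjointness from $\langle B\rangle$ over $M$, to a differential isomorphism of the composita fixing $\langle B\rangle$ pointwise and sending $A$ to $A'$; since isomorphic differential subfields have the same type and $M\subset B$, this yields $A\equiv_B A'$. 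For Weak Extension, given $A$, $B$ and an algebraically closed $C\subset B$, I would form the fraction field of $\langle C\cup A\rangle\otimes_C\langle B\rangle$ (a differential domain because $C$ is algebraically closed), realise it over $\langle B\rangle$ inside the saturated model, and let $A'$ be the image of $A$; then $\langle C\cup A'\rangle\cong_C\langle C\cup A\rangle$ gives $A'\equiv_C A$, while the free compositum gives exactly $C\cup A'\indone_C B$. This bypasses Remark \ref{R:weak_hyp} entirely.

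Granting these verifications, Proposition \ref{P:uniq} applies: for every algebraically closed $C$ and differential fields $L',M'\supset C$ it gives $L'\ind_C M'\Leftrightarrow L'\ind^{\textrm{ld}}_C M'$. Applying this with $C=k^{\mathrm{alg}}$, $L'=L\cdot k^{\mathrm{alg}}$ and $M'=M\cdot k^{\mathrm{alg}}$, and combining with the reduction of the first step, yields precisely the claimed equivalence. The main obstacle I anticipate is the bookkeeping of derivations through the two amalgamation arguments --- ensuring that the field isomorphisms and the tensor-product construction genuinely respect all $m$ commuting derivations --- but in characteristic $0$ the unique (respectively Leibniz) extension of derivations renders this routine rather than delicate; the conceptual content lies entirely in the reformulation of $\indone$ as algebraic independence over the algebraically closed base.
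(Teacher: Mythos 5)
Your proposal is correct and follows essentially the same route as the paper: define the auxiliary relation via linear disjointness (equivalently, algebraic independence) over the algebraically closed base, use Lemma \ref{L:ld} to see it is a weak notion of independence, obtain \textbf{Stationarity over models} from the unique extension of the commuting derivations to the compositum of linearly disjoint differential fields together with the description of types, obtain \textbf{Weak Extension} from existential closedness of models of DCF$_{0,m}$ (your tensor-product realisation is exactly the paper's ``easy compactness argument''), and conclude by Proposition \ref{P:uniq}. Your explicit preliminary reduction of the base $k$ to $k^{\mathrm{alg}}$ is a minor presentational difference, not a different method.
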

Since the base field $k^\text{alg}$ is algebraically closed, Remark \ref{R:lindisj} yields that non-forking is equivalent to algebraic independence of $L$ and $M$ over $k$. The above result is well-known and we do not claim that it is particularly original. We only want to emphasize that Proposition \ref{P:uniq} yields a faster characterization of non-forking independence without having to use the full-strength of the work of Kim and Pillay.  
\begin{proof}
Given $C\subset A\cap B$, set \[ A\indone_C B \text { if andd only if } k^\text{alg}\!<\hskip-1mm A\hskip-1mm> \ind^\textrm{ld}_{k^\text{alg}} L\cdot k^\text{alg},\] where $k=\mathbb Q<\hskip-1mm C\hskip-1mm>$ is the differential field generated by $C$.  This notion of independence $\indone$ clearly satisfies  {\bf Invariance} and {\bf Basedness}, as well as {\bf Symmetry, Finite Character} and {\bf Monotonicity}, for these properties are true for linear disjointness, see Example \ref{E:alg_ind}.

For {\bf Base Monotonicity}, if $A\indone_C D$ and $C\subset B\subset D$, we have that the fields $k^\text{alg}\!<\hskip-1mm A\hskip-1mm>$ and $k^\text{alg}\!<\hskip-1mm D\hskip-1mm>$ are algebraically independent over $k^\text{alg}$ and thus so are their algebraic closures. Since the algebraic closure of $L=\mathbb Q<\hskip-1mm B\hskip-1mm>$ is contained in $k\!<\hskip-1mm D\hskip-1mm>^\text{alg}$, we have that \[ L^\text{alg}\cdot k^\text{alg}\!<\hskip-1mm A\hskip-1mm> \ind^\textrm{ld}_{L^\text{alg}} L^\text{alg}\!<\hskip-1mm D\hskip-1mm>,\] so by Remarks \ref{R:lindisj} and  \ref{R:Example_weight} (using that the derivation is \emph{operator-like}), we deduce that $B\cup A\ind_B D$, as desired. 

{\bf Transitivity} follows easily from the transitivity of linear disjointness. 
The proof of Lemma \ref{L:ld} yields that $\indone$ is a weak notion of independence, which clearly satisfies {\bf Stationarity over models}, since elementary substructures are in particular  algebraically closed fields. Moreover, whenever  two differential fields are linearly disjoint over a common differential subfield, their compositum can be equipped in a unique way with a derivation extending each of the corresponding derivations, so stationarity  follows immediately from the above description of types.  

{\bf Weak Extension} follows by an easy compactness argument using the fact that models of DCF$_{0,m}$ are existentially closed differential fields, so Proposition \ref{P:uniq} yields that $\indone$ coincides with non-forking independence (over algebraically closed subsets). 
\end{proof}

\subsection*{Separably closed fields of infinite imperfection degree}
 We refer to  \cite{gS86, fD88} for a detailed introduction to the model theory of separably closed fields.  Fix now an ambient field  of positive characteristic $p>0$, in which all sets and tuples are contained. Note that for any field $K$,  the $p^\text{th}$-powers $K^p=\{x^p\}_{x\in K}$ form a subfield of $K$, so it makes sense to consider the \emph{degree of imperfection} of  $K$ as the unique element $e$ of $\N\cup\{\infty\}$ with $[K:K^p]=p^e$. 
 
A (possibly non-algebraic) field extension $k\subset K$ is \emph{separable} if $ k\ld_{k^p} K^p$.  If $k\subset K$ is separable, a subset $B$ of $K$ is \emph{$p$-independent over $k$} if  no $b$ in $B$ belongs to $K^p[k\cup B\setminus \{b\}]$. From now on, we will say that $A$ is $p$-independent if $A$ is $p$-independent over $\Fp$.  Notice that, whenever $k\subset K$ is separable, a subset $A$ of $k$ is $p$-independent  if and only if $A$ is $p$-independent, seen as a subset of $K$. A \emph{$p$-basis} is a maximal $p$-independent subset. In particular, an extension $k\subset K$ is separable if and only if some (or equivalently, every) $p$-basis of $k$ remains $p$-independent in $K$, and thus, it extends to a $p$-basis of $K$.

\begin{fact}\label{F:p-base}
	Given a separable field extension $k\subset K$ as well as a $p$-independent subset $A\subset K$ over $k$ and an element $x$ of $K$ separably algebraic over $k$, the extensions $k(A)\subset K$ and $k(x)\subset K$ are again separable. 
\end{fact}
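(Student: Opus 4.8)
The plan is to reduce both assertions to the $p$-basis criterion recorded just before the statement: a field extension $F\subseteq K$ is separable if and only if some (equivalently every) $p$-basis of $F$ remains $p$-independent in $K$. Thus, to prove that $k(A)\subseteq K$ (resp.\ $k(x)\subseteq K$) is separable, I would exhibit an explicit $p$-basis of $k(A)$ (resp.\ of $k(x)$) and check that it stays $p$-independent in $K$. Throughout, fix a $p$-basis $B_0$ of $k$. Separability of $k\subseteq K$ is exactly the $p$-independence of $B_0$ in $K$, i.e.\ the statement that the ring $K^p[k]$ is free over $K^p$ with the $B_0$-monomials (the products $\prod_b b^{e_b}$, $0\le e_b<p$) as a basis; this is the only input coming from $K/k$ in both cases.

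For the separably algebraic element $x$, the point is that $x$ contributes nothing to a $p$-basis, so $B_0$ should already be a $p$-basis of $k(x)$. First I would record the standard fact that $k(x)=k(x^p)$: the element $x$ is purely inseparable over $k(x^p)$, being a root of $(T-x)^p=T^p-x^p$, while remaining separable over $k\subseteq k(x^p)$, so $x\in k(x^p)$. Consequently $k(x)^p[B_0]\supseteq k^p[B_0][x^p]=k[x^p]=k(x)$, so $B_0$ $p$-generates $k(x)$; and $B_0$ stays $p$-independent in $k(x)$ because the algebraic extension $k\subseteq k(x)$ is separable, applying the criterion to $k\subseteq k(x)$. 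Hence $B_0$ is a $p$-basis of $k(x)$ that is $p$-independent in $K$, and the criterion yields that $k(x)\subseteq K$ is separable.

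For the $p$-independent set $A$, I would show that $B_0\cup A$ is a $p$-basis of $k(A)$ which remains $p$-independent in $K$. Generation is routine: $k(A)^p[B_0\cup A]$ contains $k^p[B_0]=k$ and $A$, hence contains $k[A]$, and it is closed under the inverses of $k(A)$ since $f^{-1}=f^{p-1}(f^p)^{-1}$ with $f^{p-1}\in k[A]$ and $(f^p)^{-1}\in k(A)^p$; thus it equals $k(A)$. The crux, and the step I expect to be the \emph{main obstacle}, is the $p$-independence of $B_0\cup A$ in $K$, where the hypotheses on $k$ and on $A$ must be combined. I would argue in terms of $p$-monomials: separability $k\ld_{k^p}K^p$ says exactly that $K^p[k]$ is free over $K^p$ on the $B_0$-monomials, while $p$-independence of $A$ over $k$ is the monomial form of the assertion that $K^p[k\cup A]$ is free over $K^p[k]$ on the $A$-monomials; transitivity of free bases then gives that $K^p[k\cup A]$ is free over $K^p$ on the $(B_0\cup A)$-monomials, which is precisely the $p$-independence of $B_0\cup A$ in $K$. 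Restricting this independence to $k(A)$ and combining it with generation shows that $B_0\cup A$ is a $p$-basis of $k(A)$, and the criterion again concludes that $k(A)\subseteq K$ is separable.
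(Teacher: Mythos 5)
Your proposal is correct. Note, however, that the paper offers no proof to compare against: the statement is quoted as a background \emph{Fact} from the standard theory of separability and $p$-bases (Mac Lane--Teichm\"uller style), so the only question is whether your argument stands on its own, and it does. You reduce everything to the criterion stated in the paper (separability of $F\subset K$ is equivalent to some $p$-basis of $F$ staying $p$-independent in $K$), and then exhibit explicit $p$-bases: $B_0$ for $k(x)$, via $k(x)=k(x^p)$ and $k=k^p[B_0]$, and $B_0\cup A$ for $k(A)$, via the translation of $p$-independence into $K^p$-linear independence of $p$-monomials and the tower argument ($K^p[k]$ free over $K^p$ on $B_0$-monomials, $K^p[k\cup A]$ free over $K^p[k]$ on $A$-monomials). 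Both translations are standard and used correctly; you also implicitly (and legitimately) use that $A\cap k=\emptyset$ and that a separably algebraic extension is separable in the linear-disjointness sense. Two cosmetic points: in the inverse-closure step the identity $f^{-1}=f^{p-1}(f^p)^{-1}$ should be applied to $f\in k[A]$ (then every element of $k(A)$ is a quotient of such elements), and the monomial characterization of $p$-independence over $k$ deserves one line of justification (each generator has its $p$-th power in $K^p[k]$, so the minimal polynomial argument gives degree exactly $p$); neither affects correctness.
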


Ershov \cite{yE68} proved that the completions of the theory of separably closed fields of positive characteristic $p$ are uniquely determined by the imperfection degree.  We will denote by SCF$_{p,\infty}$ the complete theory of separably closed fields of characteristic $p>0$ and infinite imperfection degree. The models of SCF$_{p,\infty}$ are exactly the separably closed fields containing an infinite $p$-independent subset.  Ershov's proof that  the theory SCF$_{p,\infty}$ is complete and model-complete follows from the following result of Mac Lane: 

\begin{fact}\label{F:MacLane}\textup{(}\cf \cite[Theorem 15]{sML39}\textup{)} 
Consider a finitely generated separable field  extension $k\subset L$. For any $p$-basis $A$ of $L$ over $k$, the elements of $A$ are transcendental and algebraically independent over $k$. Moreover, the field extension $k(A)\subset L$ is separably algebraic. 
\end{fact}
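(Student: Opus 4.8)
The plan is to show that a $p$-basis $A$ of $L$ over $k$ is nothing but a \emph{separating transcendence basis}: it is algebraically independent over $k$, and $L$ is separably algebraic over $k(A)$. Everything takes place in characteristic $p$, and the two inputs I would use are the standard duality between $p$-bases and derivations, and the separability hypothesis in the form $k \ld_{k^p} L^p$.

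First I would record the basic properties of $A$ through derivations. Since $A$ is $p$-independent and (by maximality) $L = L^p[k\cup A]$, the monomials $\prod_{a\in A} a^{e_a}$ with $0\le e_a<p$ form a basis of $L$ over the field $kL^p$. Because a $k$-derivation of $L$ automatically annihilates both $k$ and $L^p$ (in characteristic $p$ one has $D(x^p)=px^{p-1}D(x)=0$), such a derivation is determined by its values on $A$, and $p$-independence is exactly what guarantees that these values may be prescribed arbitrarily. Hence restriction yields an $L$-linear isomorphism $\operatorname{Der}_k(L)\cong L^A$, so $\dim_L\operatorname{Der}_k(L)=|A|$. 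On the other hand, as $k\subset L$ is finitely generated and separable, $\dim_L\operatorname{Der}_k(L)=\operatorname{trdeg}(L/k)=:n$. Therefore $A$ is finite of size $n$, and writing $A=\{a_1,\dots,a_n\}$ I obtain dual derivations $D_1,\dots,D_n\in\operatorname{Der}_k(L)$ with $D_i(a_j)=\delta_{ij}$.

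Next I would prove algebraic independence over $k$. Assume not, and choose a nonzero $P\in k[X_1,\dots,X_n]$ of least total degree with $P(a)=0$. Applying each $D_i$ and using $D_i(a_j)=\delta_{ij}$ gives $(\partial P/\partial X_i)(a)=0$; since $\partial P/\partial X_i$ has strictly smaller degree, minimality forces $\partial P/\partial X_i=0$ as a polynomial for every $i$, so $P=\sum_e c_e X^{pe}\in k[X_1^p,\dots,X_n^p]$. Writing $a^e=\prod_i a_i^{e_i}$, this reads $\sum_e c_e (a^e)^p=0$, a nontrivial $k$-linear relation among the elements $(a^e)^p$ of $L^p$. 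Here separability enters: since $k\ld_{k^p} L^p$, any subset of $L^p$ that is $k^p$-linearly independent stays $k$-linearly independent, so our family $(a^e)^p$ must already be $k^p$-linearly dependent, say $\sum_e \gamma_e (a^e)^p=0$ with $\gamma_e=\delta_e^p\in k^p$ not all zero. Extracting $p$-th roots (Frobenius being injective) yields $\sum_e \delta_e a^e=0$, a nonzero relation of total degree $(\deg P)/p<\deg P$, contradicting minimality. Thus $A$ is algebraically independent over $k$, and in particular each of its elements is transcendental over $k$.

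Finally, since $A$ is algebraically independent of size $n=\operatorname{trdeg}(L/k)$, it is a transcendence basis, so $L/k(A)$ is algebraic and, being finitely generated, finite. It remains to see separability, and here I would simply invoke Fact~\ref{F:p-base}: as $A$ is $p$-independent over $k$ and $k\subset L$ is separable, the extension $k(A)\subset L$ is separable; an algebraic separable extension is separably algebraic, which is the last assertion. The main obstacle is the previous paragraph, and precisely the case $P\in k[X_1^p,\dots,X_n^p]$: the derivation calculus alone cannot exclude it, and one genuinely needs separability, deployed through linear disjointness over $k^p$ together with injectivity of Frobenius, in order to descend the relation to smaller degree. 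The auxiliary facts about $p$-bases and the equality $\dim_L\operatorname{Der}_k(L)=\operatorname{trdeg}(L/k)$ for separable extensions are classical, but they should be stated with care, since the entire counting argument rests on them.
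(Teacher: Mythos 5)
Your argument is correct in substance, but note that the paper itself offers no proof of this Fact: it is quoted directly from Mac Lane's article, whose original argument is an induction/exchange argument on a finite generating set using linear disjointness, with no derivations at all. Your route is the modern differential one: the duality $\operatorname{Der}_k(L)\cong L^A$ coming from the monomial basis of $L$ over $kL^p$, the minimal-degree polynomial relation killed by the dual derivations, and the descent of the resulting relation in $k[X_1^p,\dots,X_n^p]$ through $k\ld_{k^p}L^p$ and injectivity of Frobenius. That central step is complete, and you correctly identify it as the precise point where separability is indispensable; the final appeal to Fact~\ref{F:p-base} (applied with $K=L$) to upgrade ``algebraic'' to ``separably algebraic'' is also legitimate.

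One caution concerns the input $\dim_L\operatorname{Der}_k(L)=\operatorname{trdeg}(L/k)$ for finitely generated separable extensions. In most references the inequality $\dim_L\operatorname{Der}_k(L)\le\operatorname{trdeg}(L/k)$ is deduced from the existence of a separating transcendence basis, i.e.\ from Mac Lane's theorem itself, so quoting the equality wholesale risks circularity. Fortunately you do not need that direction: the dual derivations used in your independence argument exist from the duality $\operatorname{Der}_k(L)\cong L^A$ alone, and that argument then yields $|A|\le\operatorname{trdeg}(L/k)$ directly; what the last step actually requires is only the reverse inequality $\operatorname{trdeg}(L/k)\le\dim_L\operatorname{Der}_k(L)=|A|$, which holds for every finitely generated field extension without any separability hypothesis (via $\dim_L\Omega_{L/k}\ge\operatorname{trdeg}(L/k)$, or the lemma that a finitely generated extension with vanishing module of differentials is finite separable). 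If you reorder the proof so that algebraic independence of $A$ is established first and only this one-sided inequality is invoked afterwards, the argument becomes self-contained and non-circular, and the conclusion that $k(A)\subset L$ is algebraic, hence separably algebraic, goes through exactly as you wrote it.
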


Using a straight-forward compactness argument and Mac Lane's result, it is easy to show that two subfields $k$ and $k'$ of  a model $K$ of $SCF_{p,\infty}$ have the same type whenever $k$ and $k'$ are isomorphic and both field extensions $k\subseteq K$ and $k'\subseteq K$ are separable.  In particular, the theory  SCF$_{p,\infty}$  is stable. Moreover, if $k\subseteq K$ is separable, then $k$ is definably closed and its algebraic closure coincides with $k^{sep}$, the separable closure of $k$. The definable closure $\dcl_\lambda(A)$ of a subset $A$ of $K$ is the smallest subfield $k$ containing $A$ such that the field extension $k\subset K$ is separable (the field $k$ can be explicitly presented, adding  all iterated values of the generalized $\lambda$-functions starting from tuples in $A$. We will not use the generalized $\lambda$-functions in this note, so we will not introduce them formally for the sake of brevity).

\begin{lemma}\label{L:SCF}\textup{(}\cf \cite[Theorem 13]{gS86}\textup{)} 
Given subsets $A$, $B$ and $C=\acl_\lambda(C)\subset A\cap B$ in a model $K$ of SCF$_{p,\infty}$,   non-forking independence can be characterized as follows:

 \[ \begin{array}{rcl}A\ind\limits_{C} B & \mbox{ if and only if }
	& \left\{\begin{array}{c} \dcl_\lambda(A) \ind\limits^\textrm{ld}_{C} \dcl_\lambda(B)
\\[5mm] \mbox{and}\\[3mm] \text{the extension } \dcl_\lambda(A) \cdot \dcl_\lambda(B) \subset K \text{ is separable}
	\end{array}\right.
\end{array},\]
where $ \dcl_\lambda(A) \cdot \dcl_\lambda(B)$ denotes the compositum field of $ \dcl_\lambda(A)$ and $ \dcl_\lambda(B)$. 
\end{lemma}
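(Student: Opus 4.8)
The plan is to follow the same template used for the differentially closed fields case (Lemma \ref{L:DCF_indep}), verifying that the right-hand side defines a weak notion of independence satisfying the hypotheses of Proposition \ref{P:uniq}, so that it must coincide with non-forking. Concretely, for $C = \acl_\lambda(C)$ I would set
\[ A\indone_C B \ \Longleftrightarrow \ \left\{ \parbox{7.5cm}{$\dcl_\lambda(A)\ind^\textrm{ld}_C \dcl_\lambda(B)$ and the extension $\dcl_\lambda(A)\cdot\dcl_\lambda(B)\subset K$ is separable.}\right. \]
The first thing to check is that this relation is well-defined and invariant: since the type of a separable subfield is determined by its isomorphism type (as recorded before the lemma), and since $\dcl_\lambda$, linear disjointness, and separability are all preserved by $\LL$-automorphisms, {\bf Invariance} is immediate. {\bf Symmetry} is clear because both conjuncts are symmetric in $A$ and $B$, and {\bf Basedness} follows as in Remark \ref{R:basedness}. {\bf Monotonicity} and {\bf Finite Character} reduce to the corresponding properties for linear disjointness together with the fact that separability is witnessed by finite subtuples (via $k\ld_{k^p}K^p$).

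The more delicate algebraic properties are {\bf Base Monotonicity} and {\bf Transitivity}, and these I expect to be the main obstacle, because the separability conjunct must be tracked alongside linear disjointness and the two interact. For {\bf Transitivity} I would argue that linear disjointness is transitive (Remark \ref{R:lindisj}) and that the separability condition composes correctly: if $\dcl_\lambda(A)\cdot\dcl_\lambda(B)\subset K$ and $\dcl_\lambda(A\cup B)\cdot\dcl_\lambda(D)\subset K$ are separable, then so is $\dcl_\lambda(A)\cdot\dcl_\lambda(D)\subset K$, using Fact \ref{F:p-base} to control $p$-bases along the tower. For {\bf Base Monotonicity}, the key point is that once the base $C$ is enlarged to $B$, the relevant compositum $\dcl_\lambda(A\cup B)$ sits inside $\dcl_\lambda(A)\cdot\dcl_\lambda(B)$ — this is exactly the operator-like behavior of the generalized $\lambda$-functions alluded to in Remark \ref{R:Example_weight}, which guarantees that the notion of independence from Example \ref{E:alg_ind} does satisfy base monotonicity in this setting; combining this containment with base monotonicity of $\ind^\textrm{ld}$ and the preservation of separability over the larger base yields the claim.

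Next I would invoke Lemma \ref{L:ld} to conclude that $\indone$ is a weak notion of independence: non-forking independence over a model implies both linear disjointness and $p$-independence (hence separability of the relevant compositum), which is precisely what the right-hand side records. {\bf Stationarity over models} then follows from the description of types over separable subfields: given $A\equiv_M A'$ with $M\cup A\indone_M B$ and $M\cup A'\indone_M B$, the isomorphism between $\dcl_\lambda(M\cup A)$ and $\dcl_\lambda(M\cup A')$ fixing $M$ extends, using linear disjointness over $M$ together with the unique extension of the field structure to the compositum (Mac Lane's Fact \ref{F:MacLane} controlling the $p$-basis), to an isomorphism of the composita with $\dcl_\lambda(B)$ that fixes $\dcl_\lambda(B)$ pointwise and respects separability, giving $A\equiv_B A'$. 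Finally, {\bf Weak Extension} follows from an existential-closedness and compactness argument, exactly as in the DCF case: one can separably embed a suitable copy $A'$ of $A$ over $C$ so that $\dcl_\lambda(C\cup A')$ is linearly disjoint from $B$ over $C$ and the compositum extension stays separable, using that models of SCF$_{p,\infty}$ are existentially closed separable fields of infinite imperfection degree. With all hypotheses of Proposition \ref{P:uniq} verified, the proposition yields that $\indone$ agrees with non-forking independence over algebraically closed subsets $C=\acl_\lambda(C)$, which is the assertion of the lemma.
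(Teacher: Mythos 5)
Your overall template is the right one (and the paper's): package the two conditions into a relation $\indone$, check the hypotheses of Proposition \ref{P:uniq} with Lemma \ref{L:ld} supplying the weak-notion-of-independence part, and read off stationarity from the description of types of separably embedded subfields. The genuine gap is in your justification of \textbf{Base Monotonicity}. You claim that $\dcl_\lambda(A\cup B)\subseteq \dcl_\lambda(A)\cdot\dcl_\lambda(B)$ holds because the generalized $\lambda$-functions are operator-like in the sense of Remark \ref{R:Example_weight}. That is not a valid general fact: $\dcl_\lambda(A\cup B)$ is the smallest subfield containing $A\cup B$ which is separable in $K$, whereas the compositum $\dcl_\lambda(A)\cdot\dcl_\lambda(B)$ need not be separable in $K$, that is, need not be closed under the $\lambda$-functions, and then it cannot contain $\dcl_\lambda(A\cup B)$. (If such a containment held unconditionally, the analogous relation in DCF$_p$, where the map $r$ plays the role of the $\lambda$-functions, would satisfy Base Monotonicity as well; the paper explicitly records that it does not.) The containment you want is true here only as a consequence of the hypothesis $A\indone_C D$: from the separability of $\dcl_\lambda(A)\cdot\dcl_\lambda(D)\subset K$ one obtains, via Fact \ref{F:p-base}, that $\dcl_\lambda(A)\cdot\acl_\lambda(D)\subset K$ is separable; hence every subset of $\dcl_\lambda(A)$ which is $p$-independent over $C$ remains so over $\acl_\lambda(D)$, and thus over $\acl_\lambda(B)=\dcl_\lambda(B)^{sep}$ by Fact \ref{F:MacLane}. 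This yields $A\indone_C B$ and, in particular, the equality $\dcl_\lambda(A\cup\acl_\lambda(B))=\dcl_\lambda(A)\cdot\acl_\lambda(B)$; only after this can base monotonicity of $\ind^\textrm{ld}$ be applied to conclude $A\cup B\indone_B D$. So the step is repairable, but both the containment and the ``preservation of separability over the larger base'' that you assert must be derived from the separability conjunct of the hypothesis, not from an operator-like property of $\lambda$.

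Two smaller points. First, you only define $\indone$ for $C=\acl_\lambda(C)$, yet Monotonicity, Base Monotonicity and Transitivity in Proposition \ref{P:uniq} quantify over intermediate base sets that need not be algebraically closed; define the relation over an arbitrary base $C$ as the paper does, namely linear disjointness of $k^{\mathrm{sep}}\cdot\dcl_\lambda(A)$ and $k^{\mathrm{sep}}\cdot\dcl_\lambda(B)$ over $k^{\mathrm{sep}}$ for $k=\dcl_\lambda(C)$, together with separability of the compositum in $K$. Second, for \textbf{Weak Extension} the paper argues via Remark \ref{R:weak_hyp}, constructing an elementary substructure $M=C(A)^{sep}$ with $A$ an infinite set of elements $p$-independent over $\dcl_\lambda(B)$ (using the infinite imperfection degree); your direct existential-closedness and compactness sketch is the alternative the paper itself says is possible via Fact \ref{F:MacLane}, but as written it is too vague to check, since you must also guarantee that the copy $\dcl_\lambda(C\cup A')$ is separably embedded in $K$ and that the compositum with $\dcl_\lambda(B)$ stays separable.
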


\begin{proof}
As before, whenever $C\subset A\cap B$ are subsets of an ambient model $K$ of SCF$_{p,\infty}$, set $A\indone_C B$ if 
\begin{itemize}
\item the fields $k^\text{sep}\cdot \dcl_\lambda(A)$ and $k^\text{sep}\cdot \dcl_\lambda(B)$ are linearly disjoint over the separable closure $k^\text{sep}$ of $k=\dcl_\lambda(C)$ and;
\item some (or equivalently, every) $p$-basis of $ \dcl_\lambda(A)$ over $k$ remains $p$-independent over $\dcl_\lambda(B)$, which is equivalent by Fact \ref{F:p-base} and the previous independence, to requiring that  the field extension $k^\text{sep}\cdot \dcl_\lambda(A) \cdot \dcl_\lambda(B) \subset K$ is separable. 
\end{itemize}
By definition of $\indone$, {\bf Basedness} holds trivially. 
Example \ref{E:alg_ind}, Remark \ref{R:Example_weight} and Lemma \ref{L:ld} yield that $\indone$ defines a weak notion of independence satisfying {\bf Invariance,  Symmetry} and {\bf Finite Character}. {\bf Monotonicity} and {\bf Transitivity} follow analogously as in the proof of \ref{L:DCF_indep}, using that if $k\subset L$ are both subsfields of our model $K$ of SCF$_{p,\infty}$ with $k$ algebraically closed (in the model-theoretic sense), the field extension $k\subset L$ is regular \cite[Remarque 1.7]{BMP19}. 

Let us now show that $\indone$ satisfies {\bf Base Monotonicity}: Assume that $A\indone_C D$ with $C=\acl(C)\subset B\subset D$. The extension $\dcl_\lambda(A)\cdot \dcl_\lambda(D)\subset K$ is separable and thus so is $\dcl_\lambda(A)\cdot \acl_\lambda(D) \subset K$ by Fact \ref{F:p-base}.

Hence, every subset of $\dcl_\lambda(A)$ which is $p$-independent over $C$ remains so over $\acl_\lambda(D)$, and thus over $\acl_\lambda(B)=\dcl_\lambda(B)^{sep}$ by Fact \ref{F:MacLane}, so $A\indone_C B$ and hence \[ \dcl_\lambda(A\cup \acl_\lambda(B))=\dcl_\lambda(A)\cdot \acl_\lambda(B),\] so  $\dcl_\lambda(A) \ind^\textrm{ld}_{\dcl_\lambda(C)} \acl(D)$ implies that \[ \dcl_\lambda(A\cup\acl_\lambda(B)) \ind^\textrm{ld}_{\acl_\lambda(B)} \dcl_\lambda(D)\cdot \acl_\lambda(B). \] Hence, we deduce that $B\cup A\indone_{B} D$, as desired.  

{\bf Stationarity over models} follows now immediately from the aforementioned description of types. Indeed, if $M\subset B$ is an elementary substructure, given $A\equiv_M A'$ with both $M\cup A\indone_M B$ and $M\cup A'\indone_M B$, then the field isomorphism from $\dcl_\lambda(M\cup A)$ to $\dcl_\lambda(M \cup A')$ fixing $M$ pointwise extends by linear disjointness to a field isomorphism from $\dcl_\lambda(M\cup A)\cdot \dcl_\lambda(B)$ to $\dcl_\lambda(M\cup A')\cdot \dcl_\lambda(B)$ fixing $\dcl_\lambda(B)$ pointwise.  As the field extensions \[ \dcl_\lambda(M\cup A)\cdot \dcl_\lambda(B)\subset K \text{ and } \dcl_\lambda(M\cup A'
)\cdot \dcl_\lambda(B)\subset K\] are both separable, we conclude that $A\equiv_B A'$ as desired. 

Though Fact \ref{F:MacLane} allows to  prove {\bf Weak extension} directly, we will show {\bf Weak extension} using the following claim and  Remark \ref{R:weak_hyp}.

\begin{claimstar}
	For every algebraically closed subset $C=\acl_\lambda(C)\subset B$, there is some elementary substructure $M$ of $K$  containing $C$ with $M\indone_C B$. 
\end{claimstar} 

\begin{claimstarproof}
Given $C=\acl_\lambda(C)\subset B$, consider an infinite subset $A$ of $K$ consisting of elements $p$-independent over  $\dcl_\lambda(B)$ (we are using that the imperfection degree of $K$ is infinite). By Fact \ref{F:p-base}, both field extensions \[ C(A)\subset K \text{ and  } C(A)\cdot \dcl_\lambda(B)\subset K\] are separable.  Moreover, the elements of $A$ are transcendental over $\dcl_\lambda(B)$ by Fact \ref{F:MacLane}, and thus $\Fp(A)$ and $\dcl_\lambda(B)$ are linearly disjoint over the prime field $\Fp$. Hence, the fields  $C(A)$ and $\dcl_\lambda(B)$ are linearly independent over $C$ by base monotonicity of linear disjointness, and hence the fields $C(A)$ and $\dcl_\lambda(B)$ are algebraically  independendent over $C$ by Remark Remark \ref{R:lindisj}. 

Now set $M= C(A)^{sep}\subset K$, which is clearly a model of SCF$_{p,\infty}$, and thus an an elementary substructure containing $C$ by model-completeness. The elements of $M$ are  separably algebraic over $C(A)$, so $M$ and $\dcl_\lambda(B)$ are algebraically independent over  $C$.  By Fact \ref{F:p-base}, the extension $M\cdot \dcl_\lambda(B)\subset K$ is again separable. 

Since $C$ is algebraically closed in $K$, the extension $C\subset M$ is regular (again by  \cite[Remarque 1.7]{BMP19}), so we deduce from Remark \ref{R:lindisj} that  $M\indone_C B$, as desired. 
\end{claimstarproof}
We conclude now from Proposition \ref{P:uniq} and Remark \ref{R:weak_hyp} that $\indone$ coincides with non-forking independence over algebraically closed subsets. 
\end{proof}
\begin{remark}
The proof of {\bf Stationarity over models} in Lemma \ref{L:SCF} only uses that the elementary substructure $M$ is relatively algebraically closed in the ambient field $K$, so linear disjointness and algebraic independence over $M$ coincide. 

It follows from the above proof that types in the theory SCF$_{p,\infty}$ over real algebraically closed subsets are stationary, despite the fact that this theory does not eliminate imaginaries. The stationarity of types over algebraically closed subsets has already been shown by Bartnick \cite{cB24} for a wider class of stable theories including SCF$_{p,\infty}$. 
\end{remark}

\subsection*{Differentially closed fields of positive characteristic $p>0$}
The model companion DCF$_p$ of the theory of differential fields of of positive characteristic $p>0$ was first considered by Wood \cite{cW73, cW76} and later on by Shelah \cite{sS73}. Whilst its axiomatization can be done in a similar way to the case of DCF$_0$ (or rather DCF$_{0,1}$), the theory DCF$_p$ is strictly stable, so not superstable. This theory shares many common traits with the theory SCF$_{p,\infty}$, though the presence of the derivation renders the issues of separability much simpler to treat. 

Each model of the complete theory DCF$_p$ is a differential field $(K, \delta)$ satisfying the axiom scheme given by Blum \cite{lB68} such that $K$ is \emph{differentially perfect}, that is, an element of $K$ is a constant (that is, its derivative is $0$) if and only if it belongs to the subfield $K^p$ of $p^\text{th}$ powers.  Now, a differential subfield $k$ is always linearly disjoint from the constants of $K$ over the constants of $k$, so it follows that $k\subset K$ is separable if and only if $k$ itself is differentially perfect. In particular, any two differentially perfect isomorphic differential subfields have the same type in DCF$_p$ and such a subfield is definably closed. The definably closure $\dcl_\mathrm{DCF_p}(A)$ of a subset $A$ is the smallest definably perfect differential subfield containing $A$. It can be explictly constructed by successively closing $A$ under the field operations, the derivation and the function $\begin{array}[t]{rccl} r:& K&\to& K \\[1mm] & x&\mapsto& \begin{cases}
x^{\frac{1}{p}}, \text{ if } \delta(x)=0 \\[2mm]
0, \text{ otherwise}
\end{cases}\end{array}.$

\noindent 
By Lemma \ref{L:ld}, the independence relation 
 \[ A\indone_C B \hskip5mm  \Longleftrightarrow \hskip5mm \dcl_\mathrm{DCF_p}(A) \ind^\textrm{ld}_{\dcl_\mathrm{DCF_p}(C)} \dcl_\mathrm{DCF_p}(B)\] is a weak notion of independence. This notion of independence satisfies {\bf Invariance, Basedness, Symmetry, Monotonicity, Transitivity} and {\bf Finite Character}.

 	 Shelah \cite[Theorem 9]{sS73} proved that, whenever $M\subset B$ is an elementary substructure of DCF$_p$ and $A\indone_M B$, the compositum field \[  \dcl_{\mathrm{DCF}_p}(M\cup A) \cdot \dcl_{\mathrm{DCF}_p}(B) \] is  again differentially perfect. This yields immediately {\bf Stationarity over models} by the above description of types. In particular, we obtain a characterization of non-forking independence over models in terms of $\indone$ by Proposition \ref{P:uniq}. 
 	 
 	 However, the above notion of independence $\indone$ does not satisfy {\bf Base Monotonicity}, so we cannot conclude from Proposition \ref{P:uniq} that $\indone$ coincides with non-forking independence. In a private discussion with Omar Le\'on S\'anchez, we obtained an easy counter-example, given by subsets which are algebraically independendent over the algebraic closure of the prime field yet they are not forking-independent. 
 	 
 \section*{Acknowledgments}
 	 We would like to thank Martin Ziegler for having used in previous occasions  the stationarity of types in stable theories in order to yield an explicit description of non-forking. We regret (and are somewhat ashamed) that it took us so long to realize how to formalize what he already did implicitly. 
 	 
 	 We would like to thank Charlotte Bartnick and Daniel Palac\'in for their suggestions and remarks of a previous version of this note.

\end{document}